\theoremstyle{plain}
\newtheorem{theorem}{Theorem}
\newtheorem{lemma}[theorem]{Lemma}
\newtheorem{corollary}[theorem]{Corollary}
\newenvironment{thmbis}[1]
 {%
 \addtocounter{theorem}{-1}%
 \begin{theorem}}
 {\end{theorem}}
\theoremstyle{remark}
\newtheorem{remark}[theorem]{Remark}
\newtheorem{definition}[theorem]{Definition}
\newtheorem{example}[theorem]{Example}
\numberwithin{equation}{section}
\def\Gr{\operatorname{Gr}}
\def\dimB{\operatorname{dim_{B}}}
\def\dimH{\operatorname{dim_{H}}}
\title[Hausdorff dimension of Graphs of Limit Functions]{Hausdorff dimension of Graphs of Limit Functions Generated by Quasi-Linear Functions}
\author[W. Wu]{Wen Wu}
\address[W. Wu]{School of Mathematics, South China University of Technology, Guangzhou 510640, People's Republic of China}
\email[corresponding author]{wuwen@scut.edu.cn}
\author[S. Zhong]{Sheng Zhong}
\address[S. Zhong]{School of Mathematics, South China University of Technology, Guangzhou 510640, People's Republic of China}
\curraddr{School of Mathematical Sciences, Beihang University, Beijing 102206, People's Republic of China}
\email{2381923039@qq.com}
\thanks{Wen Wu is the corresponding author.}
\subjclass[2020]{Primary 28A80, Secondary 11B85}
\keywords{Graphs of functions, Hausdorff dimension, Rudin-Shapiro sequence}
\date{}
\begin{document}
\linespread{1.05}\selectfont

\begin{abstract}
    The limit functions generated by quasi-linear functions or sequences (including the sum of the Rudin-Shapiro sequence as an example) are continuous but almost everywhere non-differentiable functions. Their graphs are fractal curves. In 2017 and 2020, Chen, L\"u, Wen and the first author studied the box dimension of the graphs of the limit functions.

    In this paper, we focus on the Hausdorff dimension of the graphs of such limit functions. We first prove that the Hausdorff dimension of the graph of the limit function generated by the abelian complexity of the Rudin-Shapiro sequence is $\frac{3}{2}$. Then we extend the result to the graphs of limit functions generated by quasi-linear functions.
\end{abstract}

\maketitle

\section{Introduction}

The graphs of continuous but nowhere differentiable functions, as fractal sets, have been studied for many years since the work of Besicovitch and Ursell \cite{BU1937}. A well-known example of such functions is the Weierstrass function:
\[
W_{a,b}(x) = \sum_{n=1}^{\infty} a^n \cos(b^n x)
\]
where \(0 < a < 1\), \(b > 1\), and \(ab > 1\). Based on the Weierstrass function, a class of continuous and nowhere differentiable functions has been constructed—the Weierstrass-type function:
\[
W_{a,b,\theta}^{\phi}(x) = \sum_{n=1}^{\infty} a^n \phi(b^n x + \theta)
\]
where \(0 < a < 1\), \(b > 1\), \(ab > 1\), \(\theta \in \mathbb{R}\), and \(\phi\) is an integral-periodic Lipschitz function. In particular, when \(\phi(x) = \operatorname{dist}(x,\mathbb{N})\) (the distance between \(x\) and the set of natural numbers \(\mathbb{N}\)), \(W_{a,b,\theta}^{\phi}(x)\) is the Takagi function. The fractal properties of the graphs of Weierstrass-type functions—including the Takagi function—such as determining various fractal dimensions of the graphs and their level sets, have been extensively studied (see, for example, \cite{BBR2014,R1988,MW1986,S2018,F2020} and the references therein). It is an interesting question to consider: what if $\phi$ is a non-periodic function?
 
Brillhart et al. \cite{BM1978, BEM1983} investigated the limiting behavior of the sum of the Rudin-Shapiro sequence $\mathbf{r}$ and introduced a continuous limit function that is almost everywhere non-differentiable. In 2017, L\"u et al. \cite{LCWW17} further extended this line of research: they analyzed the limit function generated by the abelian complexity of $\mathbf{r}$ and determined its box dimension. Specifically, let $\rho(n)$ be the abelian complexity function of $\mathbf{r}$; in \cite{LCWW17}, the authors showed that the limit function $\lambda$ of $\rho(\cdot)$, defined by  
\[
\lambda(x) = \lim_{k \to \infty} \frac{\rho(4^k x)}{\sqrt{4^k x}},
\]  
is continuous and almost everywhere non-differentiable. Here, $\rho(x)$ is extended to positive real numbers by $\rho(x) := \rho(\lfloor x \rfloor)$, where $\lfloor x \rfloor$ denotes the floor function (integer part) of $x$. Additionally, they proved that the box dimension of the graph of $\lambda$ on the interval $(0, 1)$ is $3/2$.  

We note that a variation of $\lambda(x)$ admits a series representation, which implies $\lambda$ is not a Weierstrass-type function. Specifically, for all $x > 0$,  
\[
\sqrt{x}\lambda(x) - \rho(x) = \sum_{j=1}^{\infty} \phi(x) 2^{-j},
\]  
where $\phi(x)$ is non-periodic; see Lemma \ref{lem:series} or \cite[Proposition 2]{LCWW17} for further details.  

Subsequently, L\"u et al. \cite{LCWW20} generalized these results to a class of unbounded sequences (including certain $k$-regular sequences; for the definition of $k$-regular sequences, see \cite[Chapter 16]{AS03}) referred to as quasi-linear functions (or quasi-linear sequences). Notably, while the box dimension of the graphs of these limit functions has been established in prior works, their Hausdorff dimension remains undetermined.

In this paper, we determine the Hausdorff dimension of the limit function $\lambda$ generated by the abelian complexity function of $\mathbf{r}$ and a class of quasi-linear functions. Our first result says that the Hausdorff dimension of the graph of $\lambda(x)$ on $(0,1)$ is $3/2$.
\begin{theorem}\label{mainthm:1}
    Let $\lambda(x)$ be defined as in \eqref{def-eq:lambda}. For all $u,v\in (0,1]$ with $u<v$, 
    \[\dimH\{(x,\lambda(x))\mid u<x<v\}=\frac{3}{2}\]
    where $\dimH$ stands for the Hausdorff dimension.
\end{theorem}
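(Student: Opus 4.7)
The upper bound $\dimH\{(x,\lambda(x))\mid u<x<v\}\le 3/2$ is immediate from $\dimH\le\dimB$ together with the box-dimension computation of \cite{LCWW17}, so the real content of the theorem is the matching lower bound. By countable stability of Hausdorff dimension together with a compact exhaustion, it is enough to show $\dimH\{(x,\lambda(x))\mid x\in I_0\}\ge 3/2$ on a single dyadic subinterval $I_0\subset(0,1]$; the stated result on every $(u,v)$ then follows by choosing $I_0\subset(u,v)$.

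\textbf{Self-affine structure.} The Rudin--Shapiro sequence is produced by a $2$-automaton, so $\rho$ satisfies affine recursions of the form $\rho(4n+j)=A_j\rho(n)+B_j(n)$ with bounded inhomogeneous terms $B_j(n)$. Combining these with the series representation of $\sqrt{x}\lambda(x)-\rho(x)$ recalled in the introduction (Lemma \ref{lem:series}) and with the defining scaling of $\lambda$, one should exhibit the graph of $\lambda$---modulo an error that vanishes at every dyadic scale and is hence dimension-neutral---as the attractor of a self-affine iterated function system of four maps on $\mathbb{R}^2$ with horizontal contraction $1/4$ and vertical contraction $1/2$. The expected dimension $\tfrac{3}{2}=2+\log(1/2)/\log 4$ coincides with the Bedford--McMullen / Gatzouras--Lalley type formula for such a carpet, so the natural candidate for a Frostman measure is the self-similar measure coming from the uniform Bernoulli on $\{0,1,2,3\}^{\mathbb{N}}$.

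\textbf{Mass distribution and the level-set estimate.} Let $\mu$ be the pushforward of Lebesgue measure on $I_0$ under $x\mapsto(x,\lambda(x))$. Via the mass distribution principle, the lower bound $\dimH\ge 3/2$ reduces to the Frostman-type level-set estimate
\[
\bigl|\bigl\{x\in[x_0-r,x_0+r]:|\lambda(x)-y_0|\le r\bigr\}\bigr|\le C_\varepsilon\,r^{3/2-\varepsilon},\qquad\text{uniformly in }(x_0,y_0,r).
\]
The horizontal window contributes the factor $r$, so the remaining $r^{1/2-\varepsilon}$ must come from a \emph{vertical non-concentration} property of $\lambda$: the function may not linger in any horizontal strip of height $r$ for more than a proportion $r^{1/2-\varepsilon}$ of a window of length $r$.

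\textbf{Main obstacle.} The level-set estimate is the true substance of the Hausdorff (as opposed to box) dimension: oscillation of order $\sqrt{r}$ on intervals of length $r$ is already enough for box dimension $3/2$, but Hausdorff dimension $3/2$ additionally requires ruling out accidental vertical clustering produced by the Rudin--Shapiro substitution. Since $\lambda$ is self-affine yet driven by an \emph{aperiodic} automatic sequence, classical periodic-Weierstrass techniques (Mauldin--Williams, Solomyak--Shen, Bara\'nski--B\'ar\'any--Romanowska) do not apply verbatim. My intended route is to exploit the combinatorial structure of the $4$-automaton directly: within every block of length $4^k$ the values of $\rho$ are spread out on a scale of order $2^k$ in a quantitatively uniform way, and a pigeonhole/counting argument on the IFS cylinders should translate this into a lower bound on the number of scale-$4^{-k}$ cylinders on which $\lambda$ achieves full oscillation of order $2^{-k}$. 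Feeding this into the mass distribution principle would then yield the Frostman exponent $3/2$; the enumerative flavour of this route has the additional advantage of being compatible with the extension to general quasi-linear functions treated in the subsequent sections of the paper.
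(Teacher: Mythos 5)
Your architecture coincides with the paper's: the upper bound comes from $\dimH\le\dimB$ together with the box-dimension computation of \cite{LCWW17}; the lower bound is localized to a single $4$-adic subinterval and proved by the mass distribution principle applied to the natural uniform measure on $4$-adic cylinders (your pushforward of Lebesgue measure under $x\mapsto(x,\lambda(x))$ is, in effect, the measure $\mu(F_{n,k})=4^{-n}$ that the paper puts on its nested family of rectangles). Your reduction of the lower bound to a Frostman-type non-concentration estimate is also the right reduction. But the proposal stops exactly where the proof has to begin: the ``main obstacle'' paragraph does not prove the level-set estimate, and the route you sketch would not prove it. You propose to establish a \emph{lower} bound on the number of scale-$4^{-k}$ cylinders on which the function achieves full oscillation of order $2^{-k}$; that kind of statement yields the box-dimension lower bound (already known), but the Frostman estimate needs an \emph{upper} bound on how many cylinder images can cluster inside one ball of radius $r$ --- i.e.\ one must rule out the graph re-entering the same horizontal strip many times within a window of length $r$, which abundant oscillation alone does not exclude.

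The paper closes this gap with a concrete arithmetic fact about the auxiliary function $a(x)=\sqrt{x}\,\lambda(x)-\rho(x)$ rather than about $\lambda$ itself: by Lemma 8 of \cite{LCWW17}, $a((z+1)4^{-k})-a(z\cdot 4^{-k})=2^{-k}$ for $1\le z\le 4^k-2$, so along the $4^{-k}$-grid the values of $a$ move by a \emph{uniform} increment. Consequently a square of side $4^{-n}$ can meet at most $4^{-n}2^{k}+O(1)$ of the level-$k$ rectangles $F_{k,z}$, each carrying mass $4^{-k}$, and choosing $k$ of order $2n$ gives $\tilde\mu(U)\le C\,|U|^{3/2}$; the result is then transported from $a$ to $\lambda$ by checking that $(x,a(x))\mapsto(x,\lambda(x))$ is Lipschitz on compact subintervals of $(0,1)$ (Lemma \ref{lem:a-lambda}). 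Your proposal neither identifies this grid monotonicity --- the actual source of the exponent $1/2$ in the vertical direction --- nor carries out the reduction from $\lambda$ to $a$ (the self-affine/Bedford--McMullen framing plays no role in the paper's argument). As written, the proposal is a correct plan with the decisive estimate left unproved, and the counting argument it gestures at is aimed at the wrong inequality.
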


Our next result extend Theorem \ref{mainthm:1} to the limit functions generated by quasi-linear functions. Let $b\geq 2$ be an integer and let $\mathbf{s}=(s(n))_{n\geq 0}$ be an integer sequence. Set 
    \begin{align*}
        \alpha & = \alpha(\mathbf{s}):=\inf\Bigl\{x\geq 0\mid \limsup\limits_{n\to+\infty}\frac{|s(n)|}{n^{x}}=0 \Bigr\},\\
        \beta & = \beta(\mathbf{s}):=\inf\Bigl\{x\geq 0\mid \limsup\limits_{n\to+\infty}\frac{|s(n+1)-s(n)|}{n^{x}}=0 \Bigr\}.
    \end{align*} 
\begin{definition}[Quasi-linear functions]\label{def:ql}
    Let $\mathbf{s}:\mathbb{N}\to\mathbb{Z}$. If $\alpha(\mathbf{s})>\beta(\mathbf{s})$ and there is a constant $C>0$ such that for all positive integers $n$ and $0\leq i\leq b-1$,
\begin{equation}\label{eq:ql-2}
    |s(bn+i)-b^{\alpha}s(n)| \leq C n^{\beta},
\end{equation}
then the sequence $\mathbf{s}$ is called \emph{quasi-linear} (in base $b$). 
\end{definition}

We extend the domain of $\mathbf{s}$ to positive real numbers by assigning $s(x):=s(\lfloor x\rfloor)$ for all $x\geq 0$. The limit function of a quasi-linear sequence $s$ is \[\lambda_{\mathbf{s}}(x):=\lim_{k\to\infty}\frac{s(b^k x)}{(b^k x)^\alpha},\quad x> 0.\] 
Let \[a_\mathbf{s}(x):=x^{\alpha}\lambda_\mathbf{s}(x)-s(x).\] From the series representation given in Lemma \ref{thm:cite-quasi-linear} (1), we can see that $a_{\mathbf{s}}$ is also not a Weierstrass-type function.

\begin{theorem}\label{mainthm:2}
    Let $\mathbf{s}$ be a quasi-linear sequence with $\alpha(\mathbf{s})=\alpha$ and $\beta(\mathbf{s})=0$. Assume that there is an (increasing) sequence $(t_n)_{n\geq 1}$ and a constant $c>0$ such that  
    \begin{equation*}
        a_{\mathbf{s}}(t_{n+1}b^{-k})-a_{\mathbf{s}}(t_{n}b^{-k})>c\,b^{-\alpha k}
    \end{equation*}
    for all $k\geq 1$ and for all $1\leq t_n < t_{n+1}\leq b^k-1$. Then for all $u,v\in (0,1]$ with $u<v$,  \[\dimH\{(x,\lambda_{\mathbf{s}}(x))\mid u<x<v\} = 2-\alpha.\]
\end{theorem}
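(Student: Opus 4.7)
My plan is to prove matching upper and lower bounds on $\dimH\Gr\lambda_{\mathbf{s}}$.

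\emph{Upper bound.} The inequality $\dimH\le\dimB$ together with the box-dimension computation $\dimB\Gr\lambda_{\mathbf{s}}=2-\alpha$ from \cite{LCWW20} immediately yields $\dimH\Gr\lambda_{\mathbf{s}}\le 2-\alpha$, so only the lower bound requires work.

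\emph{Lower bound via mass distribution.} Following the scheme of the proof of Theorem~\ref{mainthm:1}, I would take the push-forward measure $\mu:=\Pi_{*}(\mathcal{L}|_{[u,v]})$, where $\Pi(x)=(x,\lambda_{\mathbf{s}}(x))$ and $\mathcal{L}$ is Lebesgue measure. By the mass distribution principle it suffices to establish a uniform ball estimate $\mu(B(z,r))\le Cr^{2-\alpha}$ for every $z$ in the graph and every sufficiently small $r$. Since $s(x)=s(0)$ on $(0,1)$ and $x_{0}^{\alpha}$ is bounded on $[u,v]$, the identity $\lambda_{\mathbf{s}}(x)=(s(0)+a_{\mathbf{s}}(x))/x^{\alpha}$ reduces the ball estimate to a Lebesgue bound of the form
\[
\big|\{x\in[x_{0}-r,x_{0}+r]:|a_{\mathbf{s}}(x)-a^{*}|<C'r\}\big|\lesssim r^{2-\alpha},
\]
for $a^{*}:=y_{0}x_{0}^{\alpha}-s(0)$ and a constant $C'=C'(u,\alpha)$.

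\emph{Exploiting the separation hypothesis.} Fix $k$ with $r\asymp b^{-k}$. The series representation in Lemma~\ref{thm:cite-quasi-linear}(1) yields H\"older-$\alpha$ control of $a_{\mathbf{s}}$ on compact sub-intervals of $(0,1]$, so the oscillation on any $b$-adic cell of length $b^{-K}$ is $O(b^{-\alpha K})$. I would partition $[x_{0}-r,x_{0}+r]$ into cells of length $b^{-K}$ with $K$ calibrated so that the H\"older oscillation on each cell is comparable to the strip width $C'r$. The separation hypothesis then forces the number of $t_{n}$-endpoints lying in the strip $\{|a_{\mathbf{s}}(\cdot)-a^{*}|\le O(r)\}$ to be at most $\lceil 2C'r/(cb^{-\alpha K})\rceil$, and multiplying this count by the cell length $b^{-K}$ produces the target bound $\lesssim r^{2-\alpha}$, provided that cells without a $t_{n}$ endpoint can also be controlled.

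\emph{Main obstacle.} The delicate technical step is precisely this calibration of the auxiliary scale $K$ and the control of cells whose left endpoint does not belong to $(t_{n})$. The boundedness of $a_{\mathbf{s}}$ on $[u,v]$ combined with the gap $cb^{-\alpha k}$ forces $|\{t_{n}\le b^{k}\}|\lesssim b^{\alpha k}$, so the sub-sequence $(t_{n})$ is inherently sparse and most cells at any fine scale are not checked by the hypothesis directly. To bridge this gap one must invoke the quasi-linear recursion of Definition~\ref{def:ql} to propagate the separation information from $t_{n}$-cells to neighboring cells via the self-similar structure of $a_{\mathbf{s}}$, and balance this propagation against the H\"older modulus so that the contribution of non-$t_{n}$ cells does not exceed that of the $t_{n}$ cells. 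Making this passage from the discrete separation condition to a continuous Lebesgue bound quantitative, and verifying that the total contribution does not defeat the exponent $2-\alpha$, is the principal technical content of the proof.
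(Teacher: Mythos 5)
Your overall framework coincides with the paper's: the upper bound is taken from the upper box-dimension estimate of \cite{LCWW20} (Lemma \ref{thm:cite-quasi-linear}(2), which with $\beta=0$ gives $2-\alpha$), and the lower bound is obtained from the mass distribution principle applied to what is essentially the lift of Lebesgue measure to the graph. The paper realizes this same measure via nested rectangles $E_{n,k}$ of width $b^{-n}$ and height $\asymp b^{-n\alpha}$ over the $b$-adic intervals, each carrying mass $b^{-n}$, and then bounds the measure of a square $S$ of side $b^{-m}$ by counting how many rectangles at scale $m$ it meets; the separation hypothesis guarantees that the rectangles indexed by the $t_n$ have tops separated by more than $cb^{-m\alpha}$, so at most about $b^{-m}/(cb^{-m\alpha})$ of those can meet $S$.

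However, your proposal is not a proof: the step you yourself label the ``main obstacle'' --- controlling the cells whose left endpoints are not of the form $t_nb^{-K}$ --- is exactly where the argument must close, and you leave it open. Your own (correct) observation that boundedness of $a_{\mathbf{s}}$ together with the gaps $cb^{-\alpha k}$ forces $|\{n: t_n\le b^k\}|\lesssim b^{\alpha k}$ shows that for $\alpha<1$ the marked cells are a vanishing fraction of the $b^k$ cells at scale $k$, so no soft ``propagation via the quasi-linear recursion'' can control the remaining cells from the hypotheses as stated. The paper closes precisely this gap by strengthening the hypothesis: in the version of Theorem \ref{mainthm:2} actually proved in Section 3, the sequence $(t_n)$ is assumed \emph{syndetic}, $t_{n+1}-t_n\le M$, so that between two consecutive marked rectangles there are at most $M$ unmarked ones, and the total count of rectangles meeting $S$ is at most $M\cdot b^{-m}/(cb^{-m\alpha})$, which yields $\tilde\mu(S)\lesssim b^{-(2-\alpha)m}\asymp |U|^{2-\alpha}$. (Note that syndeticity combined with your sparsity bound forces $\alpha=1$ for the hypotheses to be simultaneously satisfiable, which is consistent with the paper's Examples \ref{eg:16} and \ref{eg:19}.) So the concrete missing piece in your submission is the count of non-$t_n$ cells; it cannot be supplied without an additional assumption such as syndeticity, and with that assumption the rest of your outline does go through along the paper's lines.
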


The paper is organized as follows. In Section 2, we first discuss the relationship between the Hausdorff dimensions of the graphs of $a(x)$ and $\lambda(x)$, and then establish a lower bound for the Hausdorff dimension of $\lambda(x)$’s graph. Since this lower bound coincides with the known box dimension, we complete the proof of Theorem \ref{mainthm:1}. In Section 3, following a similar approach, we show that the upper bound for the upper box dimension derived in \cite{LCWW20} also serves as a lower bound for the Hausdorff dimension of $\lambda_{\mathbf{s}}(x)$’s graph, thereby proving Theorem \ref{mainthm:2}.

\section{Hausdorff dimension of the graph of \texorpdfstring{$\lambda(x)$}{lambda(x)}}

In this section, we show that the Hausdorff dimension of the graph of $\lambda(x)$ on any interval is $3/2$. Let $\mathbf{r} = (r_0, r_1, r_2, \dots)$ denote the Rudin-Shapiro sequence on $\{-1, 1\}$, defined by $r_0 = 1$ and
\begin{equation}\label{def:rs}
    r_{2n} = r_n,\quad r_{2n+1} = (-1)^n r_n \quad (n \ge 0).
\end{equation}
Let $\rho(n)$ be the abelian complexity function of $\mathbf{r}$. As shown in \cite[Theorem 1]{LCWW17}, the sequence $(\rho(n))_{n \ge 1}$ is $2$-regular and satisfies the recurrence relation: $\rho(1) = 2$, $\rho(2) = 3$, $\rho(3) = 4$, and for all $n \ge 1$,
\begin{align*}
    \rho(4n) & = 2\rho(n) + 1, & \rho(4n+1) & = 2\rho(n),\\
    \rho(4n+2) & = \rho(n) + \rho(n+1), & \rho(4n+3) & = 2\rho(n+1).
\end{align*}
Recall that the limit function $\lambda(\cdot)$ is defined as
\begin{equation}\label{def-eq:lambda}
    \lambda(x) = \lim_{k \to \infty} \frac{\rho(4^k x)}{\sqrt{4^k x}}.
\end{equation}
The value of $\lambda(x)$ is related to the $4$-ary expansion of $x$. For all $x > 0$, let $x = \sum_{j=0}^{\infty} x_j 4^{-j}$ be its $4$-ary expansion, where $x_0 \in \mathbb{N}$ and $x_i \in \{0, 1, 2, 3\}$. Assuming there are infinitely many indices $j$ such that $x_j \neq 3$, every such $x$ has a unique $4$-ary expansion. 

The following result provides a series expression for $a(x) := \sqrt{x}\lambda(x) - \rho(x)$.
\begin{lemma}[Proposition 2 in \cite{LCWW17}] \label{lem:series}
For all $x>0$ with $x=\sum_{j=0}^{\infty}x_j 4^{-j}$, we have $$a(x):=\sqrt{x}\lambda(x)-\rho(x) = \sum_{j=1}^{\infty}d(x_j)a_j(x)2^{-j}$$ where for $j=1,2,\dots$, \[a_j(x)=\begin{cases}
    -1, & \text{ if }4^{j}x<1,\\
    \Delta\rho(\lfloor 4^{j}x\rfloor-1), & \text{ otherwise}
\end{cases}\] and $d(y):=|x-1|$ for $y=0,1,2,3$. 
\end{lemma}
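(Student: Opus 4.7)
The plan is to telescope the Rudin--Shapiro recurrences along the base-$4$ digits of $x$. Set $n_j := \lfloor 4^j x \rfloor$ for $j \geq 0$, so that $n_j = 4 n_{j-1} + x_j$ for every $j \geq 1$, and rewrite the definition of $\lambda$ as
\[
    \sqrt{x}\,\lambda(x) = \lim_{k\to\infty}\frac{\rho(n_k)}{2^k}.
\]
The four recurrences for $\rho(4n+i)$, after subtracting $2\rho(n_{j-1})$ from $\rho(n_j)$, give
\[
    c_j \;:=\; \rho(n_j) - 2\rho(n_{j-1}) \;=\;
    \begin{cases} 1, & x_j = 0,\\ 0, & x_j = 1,\\ \Delta\rho(n_{j-1}), & x_j = 2,\\ 2\,\Delta\rho(n_{j-1}), & x_j = 3.\end{cases}
\]
Dividing $\rho(n_j) = 2\rho(n_{j-1}) + c_j$ by $2^j$ and summing from $j = 1$ to $K$ telescopes to
\[
    \frac{\rho(n_K)}{2^K} \;=\; \rho(n_0) + \sum_{j=1}^K \frac{c_j}{2^j},
\]
and since $\rho(n_0) = \rho(x)$ and the left-hand side tends to $\sqrt{x}\,\lambda(x)$ by the definition of $\lambda$, the series on the right converges and equals $\sqrt{x}\,\lambda(x) - \rho(x)$.

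It then remains to verify that $c_j = d(x_j)\,a_j(x)$ for every $j$, using the values $d(0) = 1$, $d(1) = 0$, $d(2) = 1$, $d(3) = 2$ (so $d(y)=|y-1|$). The case $x_j = 1$ is trivial, since both sides vanish. In the other three cases one applies the recurrences simultaneously to $n_j$ and to $n_j - 1$: when $x_j = 0$, one has $n_j - 1 = 4(n_{j-1}-1) + 3$, so $\rho(n_j - 1) = 2\rho(n_{j-1})$ and $a_j(x) = \rho(n_j) - \rho(n_j-1) = 1$; when $x_j = 2$, one has $n_j - 1 = 4 n_{j-1} + 1$, so $\rho(n_j - 1) = 2\rho(n_{j-1})$ and $a_j(x) = \rho(n_{j-1}+1) - \rho(n_{j-1}) = \Delta\rho(n_{j-1})$; when $x_j = 3$, one has $n_j - 1 = 4 n_{j-1} + 2$, so $\rho(n_j - 1) = \rho(n_{j-1}) + \rho(n_{j-1}+1)$ and again $a_j(x) = \Delta\rho(n_{j-1})$. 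In each case, multiplying $a_j(x)$ by $d(x_j)$ reproduces the value of $c_j$ computed above.

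The only delicate point is the boundary behaviour when $n_{j-1} = 0$, i.e.\ when $4^{j-1}x < 1$, since the recurrences are stated only for $n \geq 1$. For $x_j \in \{1,2,3\}$ the step $n_{j-1} = 0 \mapsto n_j = x_j$ can be checked directly from the initial values $\rho(1) = 2$, $\rho(2) = 3$, $\rho(3) = 4$ together with the natural convention $\rho(0) = 1$; for $x_j = 0$ (so $n_j = 0$ as well) the special value $a_j(x) = -1$ in the statement is exactly what is needed to match $d(0)\,a_j(x) = -1 = \rho(0) - 2\rho(0) = c_j$, again with $\rho(0) = 1$. The main obstacle of the proof is therefore not a single conceptual step but the careful case analysis over the four digit values together with these edge cases; once every line is verified, the telescoping identity yields the series representation and its convergence simultaneously.
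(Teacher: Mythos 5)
The paper does not prove this lemma itself; it is quoted verbatim from \cite{LCWW17} (Proposition 2), so there is no in-paper proof to compare against. Your telescoping argument is correct and is the natural derivation: writing $n_j=\lfloor 4^jx\rfloor=4n_{j-1}+x_j$, extracting $c_j=\rho(n_j)-2\rho(n_{j-1})$ from the four recurrences, telescoping $\rho(n_K)2^{-K}=\rho(n_0)+\sum_{j\le K}c_j2^{-j}$, and then matching $c_j$ with $d(x_j)a_j(x)$ by applying the recurrences to $n_j-1$ as well. Your case analysis checks out (including the observation that $d$ must mean $|y-1|$, so $d(0)=d(2)=1$, $d(1)=0$, $d(3)=2$), and you correctly isolate the only genuinely delicate point, namely the boundary steps with $n_{j-1}=0$, where the convention $\rho(0)=1$ (forced by $\rho(1)=2\rho(0)$) makes the value $a_j(x)=-1$ for $4^jx<1$ exactly what the identity requires; as a bonus your argument also establishes convergence of the series and existence of the limit defining $\lambda$, since $|c_j|\le 2$.
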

\begin{remark}
    It follows from the recurrence relations of $\rho$ that $|\Delta\rho(n)| = 1$ for all $n\geq 1$. Thus $|a_j(x)|\equiv 1$.
\end{remark}
Let $f$ be a real-valued function on $\mathbb{R}$. For all $u,v\in\mathbb{R}^+$ with $u<v$, the graph of the function $f$ over the interval $(u,v)$ is denoted by \[\mathrm{Gr}_{u,v}(f):=\bigl\{(x,f(x))\mid  u< x< v\bigr\}.\] The box dimension, denoted as $\dimB$, of the graph of $f$ is obtained in \cite{LCWW17}.
\begin{lemma}[cf. \cite{LCWW17}, Theorem 3]\label{lem:dimb}
    For all $u,v\in(0,1)$ with $u<v$, $\dimB\Gr_{u,v}(\lambda)=3/2$.
\end{lemma}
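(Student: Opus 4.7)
The plan is to follow the three-step strategy used in Section 2 for Theorem \ref{mainthm:1}, with the separation hypothesis playing the role of the sequence-specific structural input.

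\emph{Step 1 (reduction to $a_{\mathbf{s}}$).} Since $(u,v)\subset(0,1)$, for every $x\in(u,v)$ we have $\lfloor x\rfloor=0$ and hence $s(x)=s(0)=:s_{0}$ is constant on $(u,v)$. Consequently
\[
\lambda_{\mathbf{s}}(x)=\frac{s_{0}+a_{\mathbf{s}}(x)}{x^{\alpha}},
\]
and the map $\Phi\colon(x,y)\mapsto(x,(s_{0}+y)/x^{\alpha})$ is $C^{\infty}$ on $[u,v]\times\mathbb{R}$ with Jacobian and inverse Jacobian bounded on any bounded strip (using $u>0$ to keep $x^{\alpha}$ bounded away from $0$ and $\infty$). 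Thus $\Phi$ is bi-Lipschitz and sends $\Gr_{u,v}(a_{\mathbf{s}})$ onto $\Gr_{u,v}(\lambda_{\mathbf{s}})$. Since Hausdorff dimension is bi-Lipschitz invariant, it is enough to show that $\dimH\Gr_{u,v}(a_{\mathbf{s}})=2-\alpha$.

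\emph{Step 2 (upper bound).} From the upper box dimension analysis carried out in \cite{LCWW20} we have $\overline{\dimB}\Gr_{u,v}(\lambda_{\mathbf{s}})\leq 2-\alpha$ whenever $\alpha(\mathbf{s})=\alpha$ and $\beta(\mathbf{s})=0$. By the bi-Lipschitz invariance from Step~1 the same bound transfers to $\Gr_{u,v}(a_{\mathbf{s}})$, and the universal inequality $\dimH\leq\overline{\dimB}$ yields $\dimH\Gr_{u,v}(a_{\mathbf{s}})\leq 2-\alpha$.

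\emph{Step 3 (lower bound).} This is the main obstacle and is where the separation hypothesis is used. The plan is to imitate the Frostman-energy argument of Section~2: let $\mu$ be the push-forward of normalised Lebesgue measure on a subinterval $J\subset(u,v)$ under the graph map $x\mapsto(x,a_{\mathbf{s}}(x))$, and show that for every $s<2-\alpha$ the energy
\[
I_{s}(\mu)=\iint_{J\times J}\bigl((x-y)^{2}+(a_{\mathbf{s}}(x)-a_{\mathbf{s}}(y))^{2}\bigr)^{-s/2}\,dx\,dy
\]
is finite. Splitting the domain according to the scale $|x-y|\asymp b^{-k}$, the $\alpha$-Hölder bound on $a_{\mathbf{s}}$ (an immediate consequence of the series representation in Lemma \ref{thm:cite-quasi-linear}) gives an upper bound on the integrand, while the separation hypothesis, applied at the grid points $t_{n}b^{-k}$ closest to $x$ and $y$, is what forces the lower bound $|a_{\mathbf{s}}(x)-a_{\mathbf{s}}(y)|\gtrsim b^{-\alpha k}$ on a positive-proportion subset of scale-$k$ pairs. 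Summing the resulting geometric estimates over $k$ gives $I_{s}(\mu)<\infty$, and Frostman's lemma then yields $\dimH\Gr_{u,v}(a_{\mathbf{s}})\geq 2-\alpha$.

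The principal technical difficulty I anticipate is the quantitative passage from the \emph{pointwise} separation on the grid $\{t_{n}b^{-k}\}$ to a \emph{measure-theoretic} lower bound for $|a_{\mathbf{s}}(x)-a_{\mathbf{s}}(y)|$ on a set of pairs of large Lebesgue measure at scale $b^{-k}$. I expect to handle this by invoking the series expression of Lemma \ref{thm:cite-quasi-linear} to decompose $a_{\mathbf{s}}(x)-a_{\mathbf{s}}(t_{n}b^{-k})$ into a main term, which is shown to be genuinely smaller than the separation gap $c\,b^{-\alpha k}$ on a definite fraction of scale-$k$ cells, and a tail of strictly higher order; the separation gap between the neighbouring grid values then survives as the dominant term in $a_{\mathbf{s}}(x)-a_{\mathbf{s}}(y)$. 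Once this transfer is in place, the energy bound and Frostman's criterion complete the argument, matching the upper bound from Step~2.
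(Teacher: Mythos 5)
Your proposal does not address the statement you were asked to prove. Lemma \ref{lem:dimb} asserts that the \emph{box} dimension of $\Gr_{u,v}(\lambda)$ equals $3/2$ for the specific limit function $\lambda$ generated by the abelian complexity $\rho$ of the Rudin--Shapiro sequence; in the paper this is simply quoted from Theorem 3 of \cite{LCWW17} (it supplies the upper bound used in the proof of Theorem \ref{mainthm:1}). What you sketch instead is a lower bound for the \emph{Hausdorff} dimension of $\Gr_{u,v}(\lambda_{\mathbf{s}})$ for a general quasi-linear sequence satisfying the separation hypothesis \eqref{eq:mainthm-2} --- that is, the content of Theorem \ref{mainthm:2}. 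Even reading you charitably (taking $\mathbf{s}=\boldsymbol{\rho}$, $b=4$, $\alpha=1/2$, $\beta=0$, so that $2-\alpha=3/2$, and using $\dimH\leq\underline{\dim}_{\mathrm{B}}\leq\overline{\dim}_{\mathrm{B}}$ to turn your two bounds into the box-dimension statement), the sequence-specific inputs are missing entirely: you never verify that $\rho$ is quasi-linear in base $4$ with $\alpha=1/2$ and $\beta=0$, nor that $a$ satisfies the required separation on the $4$-adic grid --- the latter is precisely the content of Lemma 8 of \cite{LCWW17}, which is the structural fact any proof of this lemma must use. Your Step 2 upper bound is likewise only a citation of \cite{LCWW20}, so nothing in the target statement is actually established.

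The one genuinely new mathematical step you propose, the Frostman-energy argument in Step 3, also has a real gap. Finiteness of $I_s(\mu)$ for the lifted Lebesgue measure and all $s<3/2$ requires, at every scale $\delta=b^{-k}$ and for Lebesgue-typical $x$, an anti-concentration estimate of the form $\bigl|\{y:|x-y|\leq\delta,\ |a(x)-a(y)|\leq t\}\bigr|\leq C\,t\,\delta^{1-\alpha}$; only with such a bound does the scale-$k$ contribution sum to something of order $\delta^{2-\alpha-s}$ and converge for $s<2-\alpha$. Separation of the values $a(t_n b^{-k})$ at grid points does not imply this: a priori $a$ could be nearly constant on a positive fraction of each cell, and those pairs alone make $I_s(\mu)$ diverge for every $s>1$. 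You name this transfer as the ``principal technical difficulty'' but only describe a plan for it, so the lower bound is not proved. Note that the paper sidesteps the energy method entirely: it builds the nested rectangles $F_{n,k}$, assigns mass $4^{-n}$ to each, and bounds the measure of a square of side $4^{-n}$ by counting, via the exact increments of $a$ on the $4^{-k}$-grid, how many level-$k$ rectangles the square can meet, then invokes the mass distribution principle. If you wish to keep the energy formulation you must supply the distributional estimate above; otherwise you should switch to the rectangle-counting argument, and in either case you must first verify the Rudin--Shapiro-specific hypotheses rather than assume them.
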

According to Lemma \ref{lem:dimb}, we only need to show that $\dimH\Gr_{u,v}(\lambda)\geq 3/2$.

\begin{lemma}\label{lem:a-lambda}
    For all $u,v\in (0,1)$ with $u<v$, we have $\dimH\Gr_{u,v}(a)\leq \dimH\Gr_{u,v}(\lambda)$.
\end{lemma}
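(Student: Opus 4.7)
The plan is to exhibit a Lipschitz map sending $\Gr_{u,v}(\lambda)$ onto $\Gr_{u,v}(a)$ and then invoke the fact that Hausdorff dimension does not increase under Lipschitz maps. The crucial observation is that on the interval $(0,1)$ the value $\lfloor x\rfloor=0$ is constant, so $\rho(x)=\rho(0)$ is a constant (call it $c$); hence on $(u,v)\subset(0,1)$ the relation $a(x)=\sqrt{x}\lambda(x)-\rho(x)$ simplifies to
\[a(x)=\sqrt{x}\,\lambda(x)-c.\]

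Next I would define $\Phi:(u,v)\times\mathbb{R}\to\mathbb{R}^{2}$ by $\Phi(x,y):=(x,\sqrt{x}\,y-c)$, and note that it sends $(x,\lambda(x))$ to $(x,a(x))$; hence $\Phi\bigl(\Gr_{u,v}(\lambda)\bigr)=\Gr_{u,v}(a)$. Since $\lambda$ is continuous on $(0,\infty)$ (established in \cite{LCWW17}), it is bounded on the compact interval $[u,v]$ by some constant $M>0$, so $\Gr_{u,v}(\lambda)\subset[u,v]\times[-M,M]$. On this compact set $\Phi$ is Lipschitz: using $|\sqrt{x_1}-\sqrt{x_2}|\leq \frac{1}{2\sqrt{u}}|x_1-x_2|$ (valid because $u>0$), a short calculation gives
\[\bigl|\sqrt{x_{1}}\,y_{1}-\sqrt{x_{2}}\,y_{2}\bigr|\leq \sqrt{v}\,|y_{1}-y_{2}|+\frac{M}{2\sqrt{u}}\,|x_{1}-x_{2}|,\]
so $\Phi$ is Lipschitz with a constant depending only on $u$, $v$, and $M$.

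Finally, because a Lipschitz image cannot increase Hausdorff dimension, I conclude
\[\dimH\Gr_{u,v}(a)=\dimH\Phi\bigl(\Gr_{u,v}(\lambda)\bigr)\leq\dimH\Gr_{u,v}(\lambda),\]
which is the claim. There is no serious obstacle here: the argument is essentially a change-of-variables together with the standard Lipschitz invariance of Hausdorff dimension. The only point that needs mild care is that one must have $u>0$ so that $x\mapsto\sqrt{x}$ remains Lipschitz on $[u,v]$; this is exactly the hypothesis of the lemma. If one wanted the reverse inequality (which is not claimed), the same $\Phi$ is also bi-Lipschitz on the relevant compact set, but for the stated upper bound only the Lipschitz direction is required.
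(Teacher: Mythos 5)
Your argument is correct and is essentially the paper's own proof: both exhibit a Lipschitz map carrying $\Gr_{u,v}(\lambda)$ onto $\Gr_{u,v}(a)$ (using that $\rho\equiv\rho(0)$ on $(0,1)$, that $\lambda$ is bounded on $[u,v]$, and that $x\mapsto\sqrt{x}$ is Lipschitz on $[u,v]$ since $u>0$) and then invoke Lipschitz invariance of Hausdorff dimension. Your version is if anything slightly cleaner, since you define $\Phi$ on the ambient set $[u,v]\times[-M,M]$ and orient the map in the direction actually needed, whereas the paper states its map as $(x,a(x))\mapsto(x,\lambda(x))$ but then estimates the reverse direction.
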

\begin{proof}
    Let $\phi:\mathbb{R}^2\to\mathbb{R}^2$ be the mapping defined as $(x,a(x))\mapsto (x,\lambda(x))$ for all $x\in(u,v)$. Note that $\Gr_{u,v}(\lambda)=\phi(\Gr_{u,v}(a))$. We only need to show that $\phi$ is a Lipschitz mapping.

    Note that the mapping $x\mapsto \sqrt{x}$ satisfies the Lipschitz condition on $(u,v)$ and $\lambda(x)$ is $\tfrac{1}{2}$-H\"older (see the proof of \cite[Theorem 3]{LCWW17}). Then there exist positive constants $C_1$ and $C_2$ $(<+\infty)$ such that for all $x_1,x_2\in (u,v)$,  \[|\sqrt{x_1}-\sqrt{x_2}|\leq C_1 |x_1-x_2| \quad \text{and} \quad |\lambda(x_2)|\leq C_2.\]
    For all $x_1,x_2\in (u,v)$, since $\rho(x_1)=\rho(x_2)=\rho(0)$, we have 
    \begin{align*}
        & \sqrt{(x_1-x_2)^2+(a(x_1)-a(x_2))^2} = \sqrt{(x_1-x_2)^2+(\sqrt{x_1}\lambda(x_1)-\sqrt{x_2}\lambda(x_2))^2}\\
        =\ &  \sqrt{(x_1-x_2)^2+\bigl(\sqrt{x_1}\lambda(x_1)-\sqrt{x_1}\lambda(x_2)+\sqrt{x_1}\lambda(x_2)-\sqrt{x_2}\lambda(x_2)\bigr)^2}\\
        \leq \ & \sqrt{(x_1-x_2)^2+2x_1(\lambda(x_1)-\lambda(x_2))^2+2\lambda(x_2)^2(\sqrt{x_1}-\sqrt{x_2})^2}\\
        \leq \ & \sqrt{(x_1-x_2)^2+2x_1(\lambda(x_1)-\lambda(x_2))^2+2\lambda(x_2)^2(\sqrt{x_1}-\sqrt{x_2})^2}\\
        \leq \ & \sqrt{(x_1-x_2)^2+2v(\lambda(x_1)-\lambda(x_2))^2+2C_1^2C_2^2(x_1-x_2)^2}\\
        \leq \ & C_3\sqrt{(x_1-x_2)^2+(\lambda(x_1)-\lambda(x_2))^2}
    \end{align*}
    where $C_3=\max\{\sqrt{2v},\sqrt{1+2C_1^2C_2^2}\}$. This shows that $\phi$ is a Lipschitz mapping. 
\end{proof}

To establish the lower bound of the Hausdorff dimension of the graph of \(a(x)\), we utilize the mass distribution principle (see, for example, \cite[Lemma 1.2.8]{BP2017} or \cite[Section 4.1]{F2004}).
\begin{lemma}[Mass Distribution Principle]\label{lem:mdp}
Let \(F \subset \mathbb{R}^n\). If \(F\) supports a strictly positive Borel measure \(\mu\) such that \(\mu(B(x, r)) \leq Cr^t\) for some constant \(0 < C < \infty\) and all balls \(B(x, r)\) (with center \(x\) and radius \(r\)), then \(\dimH F \geq t\).
\end{lemma}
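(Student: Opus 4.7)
The plan is to compare the given mass $\mu(F)$ to the Hausdorff $t$-measure $\mathcal{H}^t(F)$ and then invoke the fact that $\dimH F \geq t$ whenever $\mathcal{H}^t(F) > 0$. Recall that for $\delta > 0$,
\[\mathcal{H}^t_\delta(F) = \inf\Bigl\{\sum_i |U_i|^t : \{U_i\} \text{ is a countable cover of } F \text{ with } |U_i|\leq \delta\Bigr\},\]
and $\mathcal{H}^t(F) = \lim_{\delta\to 0^+}\mathcal{H}^t_\delta(F)$, while $\dimH F = \inf\{t\geq 0 : \mathcal{H}^t(F) = 0\}$.

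The key step is to control $\mu(F)$ in terms of any such cover. Given a $\delta$-cover $\{U_i\}$ of $F$, I would discard the sets that do not meet $F$ (they contribute nothing to $\mu(F)$) and for each remaining $U_i$ fix a point $x_i \in U_i \cap F$. Then $U_i \subseteq B(x_i, |U_i|)$, hence
\[F \subseteq \bigcup_i U_i \subseteq \bigcup_i B(x_i, |U_i|).\]
Since balls are Borel measurable, monotonicity and countable subadditivity of $\mu$ together with the hypothesis $\mu(B(x,r))\leq Cr^t$ yield
\[\mu(F) \leq \sum_i \mu\bigl(B(x_i, |U_i|)\bigr) \leq C\sum_i |U_i|^t.\]
Taking the infimum over all $\delta$-covers gives $\mu(F) \leq C\,\mathcal{H}^t_\delta(F)$, and letting $\delta \to 0^+$ produces $\mu(F) \leq C\,\mathcal{H}^t(F)$. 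Because $\mu$ is strictly positive on $F$, $\mu(F) > 0$, so $\mathcal{H}^t(F) \geq \mu(F)/C > 0$, whence $\dimH F \geq t$ by the definition of Hausdorff dimension.

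There is no substantive obstacle in this argument; it is routine measure-theoretic bookkeeping once one recalls the definitions of $\mathcal{H}^t_\delta$ and $\dimH$. The only minor subtlety is that elements of a general cover need not be $\mu$-measurable, which is why one passes at once to closed balls of radius $|U_i|$ — these are Borel sets, and the mass hypothesis applies to them directly. (If one preferred radii $|U_i|/2$, the constant $C$ would simply be replaced by $2^t C$; either way the conclusion is unchanged.)
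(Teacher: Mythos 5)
Your proof is correct and is the standard argument for the Mass Distribution Principle; the paper itself does not prove this lemma but cites it from \cite{BP2017} and \cite{F2004}, where essentially the same covering argument (bounding $\mu(F)$ by $C\sum_i |U_i|^t$ over $\delta$-covers and concluding $\mathcal{H}^t(F)\geq \mu(F)/C>0$) is given. Your handling of the measurability of the cover sets by passing to balls centered at points of $U_i\cap F$ is exactly the right bookkeeping.
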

Thus, we need a strictly positive Borel measure supported on the graph of \(a(x)\). For all $n\geq 1$, let \[f_n(x):=\sum_{j=1}^{n}d(x_j)a_j(x)2^{-j}.\] Note that for all $x\in [0,1]$, 
\begin{align}
    |a(x)-f_n(x)| & = \Bigl|\sum_{j=1}^{\infty}a_j(x)d(x_j)2^{-j}-\sum_{j=1}^{n}a_j(x)d(x_j)2^{-j}\Bigr|\notag\\ 
    & = \Bigl|\sum_{j=n+1}^{\infty}a_j(x)d(x_j)2^{-j}\Bigr|\notag\\
    & \leq 2 \sum_{j=n+1}^{\infty}2^{-j}=2^{-n+1}.\label{eq:a-fn}
\end{align} 
Then $\{f_n(x)\}_{n\geq 1}$ converges to $a(x)$ uniformly on $[0,1]$. Let $I_{n,k}$ be the $4$-adic interval $[\frac{k}{4^n},\frac{k+1}{4^n})$ where $0\leq k<4^n$. Denote by $I_{n}(x):=[\tfrac{i}{4^n},\tfrac{i+1}{4^n})$ ($i\in\mathbb{N}$) the $4$-adic interval that contains $x$.
\begin{lemma}\label{lem:f-step}
    Let $x\in [0,1)$ with $x=\sum_{j=1}^{\infty}x_j4^{-j}$. 
    \begin{enumerate}
        \item For every $n\geq 1$, $f_n(x)$ is a step function and its graph is composed of $4^n$ horizontal line segments of length $4^{-n}$. More precisely, for all $k=0,1,\dots,4^n-1$, $f_n(x)=f_n(\tfrac{k}{4^n})$ for all $x\in I_{n,k}$.
        \item If $x_n=1$ for some $n\geq 1$, then for all $y\in I_{n-1}(x)$, $|f_n(x)-f_n(y)|\leq 2^{-n+1}$.
    \end{enumerate}
\end{lemma}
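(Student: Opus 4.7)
The plan is to reduce both parts to a single observation: each summand $d(x_j)a_j(x)$ appearing in $f_n(x)=\sum_{j=1}^n d(x_j)a_j(x)2^{-j}$ depends only on the first $j$ base-$4$ digits of $x$, up to an endpoint condition that is itself constant on level-$n$ $4$-adic intervals. For part (1), I would fix a $4$-adic interval $I_{n,k}$ and check that every summand with $j\leq n$ is constant on it. The inclusion $I_{n,k}\subseteq I_{j,\lfloor k/4^{n-j}\rfloor}$ immediately makes the digit $x_j$, and hence $d(x_j)$, constant on $I_{n,k}$. For the factor $a_j(x)$, the condition $4^j x<1$ is equivalent to $x<4^{-j}$; because $4^{-j}$ is an integer multiple of $4^{-n}$, whether the inequality holds is constant on $I_{n,k}$, and in the opposite case the integer $\lfloor 4^j x\rfloor=\lfloor k/4^{n-j}\rfloor$ is fixed, so $a_j(x)=\Delta\rho(\lfloor k/4^{n-j}\rfloor-1)$ is constant as well. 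Summing $n$ constants gives the required step function with $4^n$ horizontal segments of length $4^{-n}$.

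For part (2), the same reasoning applied at level $n-1$ yields $a_j(x)=a_j(y)$ for every $j\leq n-1$ and every $y\in I_{n-1}(x)$, while the first $n-1$ base-$4$ digits of $x$ and $y$ coincide. Consequently the first $n-1$ summands in $f_n(x)-f_n(y)$ cancel, leaving
\[
f_n(x)-f_n(y)=\bigl(d(x_n)a_n(x)-d(y_n)a_n(y)\bigr)2^{-n}.
\]
The hypothesis $x_n=1$ forces $d(x_n)=|1-1|=0$, so this collapses to $-d(y_n)a_n(y)2^{-n}$. The bound $|d(y_n)|\leq d(3)=2$ combined with $|a_n(y)|=1$ (from the remark following Lemma \ref{lem:series}) then gives $|f_n(x)-f_n(y)|\leq 2\cdot 2^{-n}=2^{-n+1}$.

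No substantive obstacle arises; the entire argument is digit-tracking bookkeeping together with the crucial cancellation $d(1)=0$. The only point deserving minor care is confirming that no interval $I_{n,k}$ is split by a threshold of the form $x=4^{-j}$ with $j\leq n$, which is immediate since $4^{-j}$ is itself a level-$n$ grid point. It is worth noting that this lemma is exactly what is needed to feed into the mass-distribution scheme of Lemma \ref{lem:mdp}: by \eqref{eq:a-fn} the approximants $f_n$ converge to $a$ uniformly at rate $2^{-n+1}$, so the $2^{-n+1}$ bound on $I_{n-1}(x)$ when $x_n=1$ controls the oscillation of $a$ on parent intervals where the chosen digit equals $1$.
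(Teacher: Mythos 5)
Your proposal is correct and follows essentially the same route as the paper: both parts reduce to observing that $d(x_j)$ and $a_j(x)$ are constant on level-$n$ $4$-adic intervals for $j\leq n$, and part (2) hinges on the same cancellation $d(x_n)=d(1)=0$ followed by the bound $|a_n(y)|=1$, $d(y_n)\leq 2$. Your explicit remark that the thresholds $4^{-j}$ are level-$n$ grid points is a small point of care the paper leaves implicit, but the argument is the same.
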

\begin{proof}
    (1) Since for all $x\in I_{n,k}$, $\lfloor 4^jx\rfloor=\lfloor 4^{j-n} k \rfloor$. According to the definition of $a_j(x)$, we have  
    \begin{equation}\label{eq:aj}
        a_j(x)=a_j(k)=\begin{cases}
        -1, & \text{ if }4^{j-n}k < 1,\\
        \Delta\rho(\lfloor 4^{j-n}k\rfloor-1), & \text{ otherwise},
    \end{cases}
    \end{equation}
    for all $j=0,1,\dots, n$. This shows that $a_j(x)$ ($j=0,1,\dots,n$) are constant on $I_{n,k}$. 

    Note that for all $x\in I_{n,k}$, $x_j=\lfloor 4(4^{j-1}k\mod 1)\rfloor=:k_j$ and $d(x_j)=d(k_j)$ for $j=1,\dots,n$. Consequently, for all $x\in I_{n,k}$, \[f_n(x)=\sum_{j=1}^{n}d(x_j)a_j(x)2^{-j}=\sum_{j=1}^{n}d(k_j)a_j(k)2^{-j}=f_n(k/4^n).\] So $f_n(x)$ takes a constant value on $I_{n,k}$.

    (2) Write $y=\sum_{j=1}^{\infty}y_j4^{-j}$. For all $y\in I_{n-1}(x)$, we have  \[y_j=x_j \quad\text{and}\quad d(x_j)=d(y_j)\] for $j=1,\dots,n-1$. Moreover, it follows from Equation \eqref{eq:aj} that $a_j(x)=a_j(y)$ for $j=0,1,\dots,n-1$. Note also that $d(x_n)=d(1)=0$. We have
    \begin{align*}
        |f_n(x)-f_n(y)| & = \left|\sum_{j=1}^n a_j(x)d(x_j)2^{-j}-\sum_{j=1}^n a_j(y)d(y_j)2^{-j}\right|\\
        & = |a_n(x)d(x_n)2^{-j}-a_n(y)d(y_n)2^{-j}|\\
        & = |a_n(y)d(y_n)2^{-j}|\\
        & \leq 2^{-n+1}
    \end{align*}
    where the inequality holds since $|a_n(x)|=1$ and $d(y_n)\leq 2$.
\end{proof}

By Lemma \ref{lem:f-step} (1), $f$ is a step function. For $k=0,1,\dots,4^n-1$, let \[F_{n,k}=[\tfrac{k}{4^n},\tfrac{k+1}{4^n})\times [f_n(\tfrac{k}{4^n})-2^{-n+1},f_n(\tfrac{k}{4^n})+2^{-n+1}]\] be a rectangle which covers the image $f(I_{n,k})$. Let  
\begin{equation}\label{lem:def-Fn}
    \mathcal{F}_n=\{F_{n,k}\}_{k=0}^{4^n-1}\quad\text{and}\quad 
    F_n=\bigcup_{k=0}^{4^n-1}F_{n,k}.
\end{equation} We see that $\Gr(f_n)\subset F_n$. Next, we show that $\{\mathcal{F}_n\}$ has a nested structure. 

\begin{lemma}\label{lem:set-Fn}
    Let $\{\mathcal{F}_n\}_{n\geq 1}$ be families of sets given in Equation \eqref{lem:def-Fn}. Then for all $n\geq 1$, \[F_{n+1}\subset F_n \quad \text{and} \quad F_{n+1,4k+i}\subset F_{n,k}\] for $k=0,1,\dots,4^n-1$ and $i=0,1,2,3$.
\end{lemma}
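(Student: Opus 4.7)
The plan is to verify the cell-by-cell inclusion $F_{n+1,4k+i}\subset F_{n,k}$ for every $k\in\{0,\dots,4^n-1\}$ and every $i\in\{0,1,2,3\}$; taking unions over $i$ and $k$ then yields $F_{n+1}\subset F_n$.

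For the horizontal projection, the inclusion $[\tfrac{4k+i}{4^{n+1}},\tfrac{4k+i+1}{4^{n+1}})\subset [\tfrac{k}{4^n},\tfrac{k+1}{4^n})$ is immediate from $4$-adic arithmetic, since $\tfrac{4k}{4^{n+1}}=\tfrac{k}{4^n}$ and $\tfrac{4k+i+1}{4^{n+1}}\leq \tfrac{4(k+1)}{4^{n+1}}=\tfrac{k+1}{4^n}$. For the vertical projection, set $x:=\tfrac{4k+i}{4^{n+1}}$. The first $n$ base-$4$ digits of $x$ coincide with those of $\tfrac{k}{4^n}$, and its $(n+1)$-st digit equals $i$. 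Since $x\in I_{n,k}$, Lemma \ref{lem:f-step}(1) gives $f_n(x)=f_n(\tfrac{k}{4^n})$. From the definitions of $f_n$ and $f_{n+1}$ I then obtain the single-term identity
\[f_{n+1}(x)-f_n(x)=d(i)\,a_{n+1}(x)\,2^{-(n+1)},\]
and combined with $|d(i)|\leq 2$ and $|a_{n+1}(x)|=1$ this yields
\[\bigl|f_{n+1}(x)-f_n(\tfrac{k}{4^n})\bigr|\leq 2^{-n}.\]

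Now the vertical interval of $F_{n+1,4k+i}$ has half-width $2^{-n}$ centered at $f_{n+1}(x)$, while the vertical interval of $F_{n,k}$ has half-width $2^{-n+1}$ centered at $f_n(\tfrac{k}{4^n})$. Since the two centers differ by at most $2^{-n}$ and $2^{-n}+2^{-n}=2^{-n+1}$, the shorter interval is contained in the taller one, which completes the cell-wise containment. There is no serious obstacle to this lemma; the content is purely a bookkeeping check that each new term added in passing from $f_n$ to $f_{n+1}$ has magnitude at most $2^{-n}$, which is exactly the gap between the two half-widths, so that the factor-of-two shrinkage of $\mathcal{F}_{n+1}$ against $\mathcal{F}_n$ is tight but admissible.
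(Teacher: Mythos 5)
Your proof is correct and follows essentially the same route as the paper's: both reduce the cell-wise containment $F_{n+1,4k+i}\subset F_{n,k}$ to the observation that the centers differ by at most $2^{-n}$, which is exactly the gap $2^{-n+1}-2^{-n}$ between the half-widths. The only cosmetic difference is that you bound the increment $f_{n+1}-f_n$ directly via the single new series term $d(i)a_{n+1}(x)2^{-(n+1)}$, whereas the paper pivots through the point in $I_{n+1,4k+1}$ (where $d(x_{n+1})=0$, so $f_{n+1}=f_n$) and then invokes Lemma \ref{lem:f-step}(2); the underlying estimate is identical.
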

\begin{proof}
    Note that \(I_{n,k} = \bigcup_{i=0}^{3} I_{n+1,4k+i}\). Fix \(x \in I_{n+1,4k+1}\). We have \(\lfloor 4^{n+1}x \rfloor \equiv 1 \pmod{4}\) and \(d(x_{n+1}) = 0\). Therefore,
    \begin{equation}\label{eq:n_to_n+1}
        f_{n+1}(x) = \sum_{j=1}^{n+1} a_j(x)d(x_j) = \sum_{j=1}^{n} a_j(x)d(x_j) = f_n(x).
    \end{equation}
    For all \(y \in I_{n+1,4k+i} \subset I_{n,k}\) (where \(i = 0,1,2,3\)), by Equation \eqref{eq:n_to_n+1} and Lemma \ref{lem:f-step} (2), we obtain
    \[
    |f_{n}(x) - f_{n+1}(y)| = |f_{n+1}(x) - f_{n+1}(y)| \leq 2^{-n},
    \]
    which implies
    \[
    \left[ f_{n+1}(y) - 2^{-n}, f_{n+1}(y) + 2^{-n} \right] \subset \left[ f_n(x) - 2^{-n+1}, f_n(x) + 2^{-n+1} \right].
    \]
    Consequently, \(F_{n+1,4k+i} \subset F_{n,k}\) for each \(i = 0,1,2,3\), and
    \[
    F_{n+1} = \bigcup_{k=0}^{4^n - 1} \bigcup_{i=0}^{3} F_{n+1,4k+i} \subset \bigcup_{k=0}^{4^n - 1} F_{n,k} = F_n,
    \]
    which completes the proof.
\end{proof}

Now we are ready to proof Theorem \ref{mainthm:1}.
\begin{thmbis}{mainthm:1}
    For all $u,v\in (0,1]$ with $u<v$, $\dimH\Gr_{u,v}(\lambda)=3/2$.
\end{thmbis}
\begin{proof}
    It is known in \cite[Theorem 3]{LCWW17} that $\dimB\Gr_{u,v}(\lambda)=3/2$. By Lemma \ref{lem:a-lambda}, to show that $\dimH\Gr_{u,v}(\lambda)=3/2$,  we only need to show that $\dimH\Gr_{u,v}(a)\geq 3/2$. For $u,v\in(0,1]$ with $u<v$, there is $n_0$ and $k_0$ such that $I_{n_0,k_0}=[\frac{k_0}{4^{n_0}},\frac{k_0+1}{4^{n_0}})\subset (u,v)$. Let $u_0=\frac{k_0}{4^{n_0}}$ and $v_0=\frac{k_0+1}{4^{n_0}}$. If we show that $\dimH \Gr_{u_0,v_0}(a)\geq 3/2$, then $\dimH\Gr_{u,v}(a)\geq  \dimH \Gr_{u_0,v_0}(a)\geq 3/2$. In the following, we first construct a positive Borel measure supported on the graph of $a(x)$. Then apply the mass distribution principle to obtain the lower bound of the Hausdorff dimension of $\Gr_{u,v}(a)$.

    Note that $\mathcal{F}_{n}=\{F_{n,0}, F_{n,1}, \dots, F_{n,4^n-1}\}$ is a collection of disjoint sets. We define a mass distribution $\mu$ on $F_1$ by assigning $\mu(F_{n,k})=4^{-n}$ for all $n\geq 1$ and $k=0,1,\dots,4^{n}-1$. Using the nested structure showed in Lemma \ref{lem:set-Fn}, we have \[1=\mu(F_n)=\sum_{k=0}^{4^n-1}\mu(F_{n,k}) \text{ and } \mu(F_{n,k})=\sum_{i=0}^{3}\mu(F_{n+1,4k+i}).\] For each $n\geq 1$, we define $\mu(A)=0$ for all Borel sets $A$ with $A\cap F_n=\varnothing$. Then $\mu$ can be extended to a Borel measure on $\mathbb{R}^2$ and the support of $\mu$ is contained in $\cap_{n=1}^{\infty}\bar{F}_n$. By Equation \eqref{eq:a-fn} and the definition of $F_{n_0,k_0}$, we have $\Gr_{u_0,v_0}(a)\subset F_{n_0,k_0}$. Let $\tilde{\mu}$ be the restriction of $\mu$ on $F_{n_0,k_0}$. Namely, $\tilde{\mu}(A):=\mu(A\cap F_{n_0,k_0})/\mu(F_{n_0,k_0})=4^{n_0}\mu(A\cap F_{n_0,k_0})$ for all Borel sets $A$.

    For any Borel set $U$ with $|U|<1$, we have $4^{-n-1}\leq |U| < 4^{-n}$ for some integer $n\geq n_0$. Then $U$ is covered by a square, denoted by $S$, of side length $4^{-n}$. Lemma 8 in \cite{LCWW17} says for all $k\geq 1$ and $z=1,2,\dots,4^k-1$, 
    \begin{equation*}
        a((z+1)4^{-k})-a(z\cdot 4^{-k}) = \begin{cases}
            2^{-k}, & \text{if }z\leq 4^k-2,\\
            2^{-k}-1, & \text{if }z=4^k-1.
        \end{cases}
    \end{equation*}
    For all $k>2n$, the monotonicity of $a(\cdot)$ on the discrete points $\{z\cdot 4^{-k}\}_z$ implies that $S$ can contain at most $\tfrac{4^{-n}}{2^{-k}}+1$ points of the form $(4^{-k}z, a(4^{-k}z))$. By Equation \eqref{eq:a-fn} and the definition of $F_{k,z}$, we see that $(4^{-k}z, a(4^{-k}z))\in F_{k,z}$. Then $S$ intersects at most $\tfrac{4^{-n}}{2^{-k}}+3$ sets $F_{k,z}$ for $z=1,2,\dots, 4^k-1$. Therefore, 
    \begin{align*}
        \tilde{\mu}(U) & \leq  \tilde{\mu}(S) \leq \bigcup_{z:\, F_{k,z}\cap S\neq\varnothing}\tilde{\mu}(F_{k,z}) \leq (\tfrac{4^{-n}}{2^{-k}}+3)\cdot 4^{n_0-k}\\
        & \leq  4^{n_0}(4^{-n}\cdot 2^{-2n}+3\cdot 4^{-2n})\leq 4^{n_0+1}\cdot  2^{-3n}\leq 4^{n_0+3}\, |U|^{3/2}.
    \end{align*}
    Then it follows from Lemma \ref{lem:mdp} (the Mass Distribution Principle), $\dimH\Gr_{u_0,v_0}(a)\geq 3/2$.
\end{proof}

\begin{corollary}
    $\dimH\Gr_{0,1}(\lambda)=3/2$.
\end{corollary}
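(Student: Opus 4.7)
The plan is to deduce this corollary from Theorem \ref{mainthm:1} almost immediately, since the only new feature is the left endpoint $u=0$, which Theorem \ref{mainthm:1} excludes. The two bounds are established separately and are both very short.

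For the lower bound, I would simply observe that $\Gr_{0,1}(\lambda) \supset \Gr_{1/2,1}(\lambda)$. Applying Theorem \ref{mainthm:1} with $u=1/2$ and $v=1$ (both lying in $(0,1]$ with $u<v$) and using the monotonicity of Hausdorff dimension under inclusion, one gets $\dimH\Gr_{0,1}(\lambda)\geq 3/2$ at once.

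For the upper bound, I would write the open interval $(0,1)$ as the countable union
\[
(0,1) = \bigcup_{n=1}^{\infty}\Bigl(\tfrac{1}{n+1},\,1\Bigr),
\]
so that $\Gr_{0,1}(\lambda) = \bigcup_{n\geq 1}\Gr_{1/(n+1),\,1}(\lambda)$. Since $1/(n+1)\in(0,1]$ and $1/(n+1)<1$ for every $n\geq 1$, Theorem \ref{mainthm:1} gives $\dimH\Gr_{1/(n+1),1}(\lambda)=3/2$ for each $n$. By the countable stability of Hausdorff dimension, $\dimH\Gr_{0,1}(\lambda) = \sup_{n\geq 1}\dimH\Gr_{1/(n+1),1}(\lambda) = 3/2$.

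There is no real obstacle here: the only point worth noting is that the exhaustion of $(0,1)$ by subintervals with strictly positive left endpoint, combined with the countable stability of $\dimH$, makes the restriction $u>0$ in Theorem \ref{mainthm:1} harmless for the corollary. Combining the two bounds yields the claimed equality.
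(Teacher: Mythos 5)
Your proof is correct and follows essentially the same route as the paper: exhaust $(0,1)$ by countably many subintervals to which Theorem \ref{mainthm:1} applies and invoke countable stability of $\dimH$. The only (cosmetic) difference is that you use the nested exhaustion $(0,1)=\bigcup_n(\tfrac{1}{n+1},1)$, which covers $(0,1)$ exactly, whereas the paper uses the disjoint intervals $(\tfrac{1}{j+1},\tfrac{1}{j})$ and must additionally note that the countably many omitted endpoints do not affect the Hausdorff dimension.
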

\begin{proof}
    Since removing countable points from $\Gr_{0,1}(\lambda)$ does not change its Hausdorff dimension, we have $\dimH\Gr_{0,1}(\lambda) = \dimH\bigcup_{j=1}^{\infty}\Gr_{\frac{1}{j+1},\frac{1}{j}}(\lambda)$. By Theorem \ref{mainthm:1}, $\dimH\Gr_{\frac{1}{j+1},\frac{1}{j}}(\lambda)=3/2$ for all $j\geq 1$. Then $\dimH\Gr_{0,1}(\lambda) =3/2$.
\end{proof}

\begin{remark}
    The graph of a Brownian motion is a continuous but nowhere differentiable curve almost surely, and its fractal properties have been studied in, for example, \cite{A1978,HY2019, T1955}. While the graph of a one-dimensional Brownian motion $B(t)$ has a Hausdorff dimension of $3/2$ almost surely, one might wonder: is $\lambda(x)$ a realization of a Brownian motion? In fact, it is not. If $\lambda(x)$ were a Brownian motion, then for all $t, s > 0$ with $t > s$, the increment $\lambda(t) - \lambda(s)$ would follow a zero-mean Gaussian distribution $N(0, t-s)$. However, it is easy to see from the definition of $\lambda(x)$ that $\lambda(4x) = \lambda(x)$. This implies $\lambda(4x) - \lambda(x) = 0$, which clearly does not follow the Gaussian distribution $N(0, 4x-x)$. Therefore, $\lambda(x)$ cannot be a Brownian motion almost surely.
\end{remark}

\section{Graphs of limit functions generated by quasi-linear sequences}
In this section, we extend Theorem \ref{mainthm:1} to the limit functions of quasi-linear sequences. Under certain conditions, we obtain the Hausdorff dimension of the graphs of the limit functions.

Let \( b \geq 2 \) be an integer and let \( \mathbf{s} = (s(n))_{n \geq 0} \) be an integer sequence. Suppose that \( \mathbf{s} \) is a quasi-linear sequence as given in Definition \ref{def:ql}. Recall that the limit function of \( \mathbf{s} \) is defined by
\begin{equation}\label{def:lambda-s}
    \lambda_{\mathbf{s}}(x) := \lim_{k \to \infty} \frac{s(b^k x)}{(b^k x)^\alpha}, \quad x > 0.
\end{equation}
In \cite{LCWW20}, L\"u et al. provided a precise series representation for the function
\[ a_{\mathbf{s}}(x) := x^\alpha \lambda_{\mathbf{s}}(x) - s(x) \]
and derived an upper bound for the upper box dimension of the graph of \( \lambda_{\mathbf{s}}(x) \).

\begin{lemma}[cf. \cite{LCWW20}, Prop. 10 \& Cor. 18]\label{thm:cite-quasi-linear}
    Let \( \mathbf{s} \) be a quasi-linear sequence and let \( \lambda_{\mathbf{s}} \) be its limit function defined by \eqref{def:lambda-s}.
    \begin{enumerate}
        \item For all \( x > 0 \), we have
        \[ a_{s}(x) = \sum_{j=1}^{\infty} c(j,x) \, b^{-\alpha j}, \]
        where \( c(j,x) := s(b^j x) - b^\alpha s(b^{j-1} x) \).
        \item For all \( u, v \in \mathbb{R}^+ \) with \( u < v \), the upper box dimension of \( \Gr_{u,v}(\lambda_{\mathbf{s}}) \) satisfies
        \[ \overline{\dim}_{\mathrm{B}} \Gr_{u,v}(\lambda_{s}) \leq \max\left\{ 2 - (\alpha - \beta), 1 \right\}. \]
    \end{enumerate}
\end{lemma}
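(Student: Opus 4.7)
The plan is to derive the upper bound $\dimH \Gr_{u,v}(\lambda_{\mathbf{s}}) \leq 2-\alpha$ directly from Lemma~\ref{thm:cite-quasi-linear}~(2) and then to prove the matching lower bound by the mass distribution principle, mimicking the two-step argument used for Theorem~\ref{mainthm:1}. With $\beta(\mathbf{s})=0$ the cited lemma gives $\overline{\dim}_{\mathrm{B}} \Gr_{u,v}(\lambda_{\mathbf{s}}) \leq \max\{2-\alpha,1\} = 2-\alpha$ (assuming $\alpha<1$; the other case is trivial), which bounds $\dimH$ from above. For the lower bound I first reduce to $a_{\mathbf{s}}$: on $(u,v)\subset (0,1)$ the integer part $s(x)=s(0)$ is constant, $x^\alpha$ is bounded away from zero, and (once the uniform bound $|c(j,x)|\leq C$ discussed below is in hand) $a_{\mathbf{s}}$ and hence $\lambda_{\mathbf{s}}$ are bounded on $(u,v)$. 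A direct mimic of the computation in Lemma~\ref{lem:a-lambda}, with $\sqrt{\cdot}$ replaced by $x\mapsto x^{\alpha}$, shows that the map $(x,\lambda_{\mathbf{s}}(x))\mapsto (x,a_{\mathbf{s}}(x))$ is Lipschitz on $\Gr_{u,v}(\lambda_{\mathbf{s}})$, so $\dimH \Gr_{u,v}(\lambda_{\mathbf{s}}) \geq \dimH \Gr_{u,v}(a_{\mathbf{s}})$. As in the final proof of Theorem~\ref{mainthm:1}, I may then restrict to a single $b$-adic subinterval $I_{n_0,k_0}\subset (u,v)$ with endpoints $u_0=k_0/b^{n_0}$ and $v_0=(k_0+1)/b^{n_0}$.

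Next I would adapt the $f_n$--$F_n$ machinery of Section~2 to base $b$. Using Lemma~\ref{thm:cite-quasi-linear}~(1), set $f_n(x) := \sum_{j=1}^{n} c(j,x)\, b^{-\alpha j}$. Since $c(j,x)=s(b^j x)-b^{\alpha}s(b^{j-1}x)$ depends only on $\lfloor b^n x\rfloor$, each $f_n$ is constant on the $b$-adic interval $I_{n,k}=[k/b^n,(k+1)/b^n)$. Writing $m=\lfloor b^{j-1}x\rfloor$ and applying \eqref{eq:ql-2} with $\beta=0$ yields $|c(j,x)|\leq C$ uniformly in $j$ and $x\in(0,1)$, and hence
\[
|a_{\mathbf{s}}(x)-f_n(x)| \leq C\sum_{j>n} b^{-\alpha j} \leq C'\, b^{-\alpha n}.
\]
This lets me introduce rectangles $F_{n,k} := I_{n,k}\times [f_n(k/b^n)-C'b^{-\alpha n},\, f_n(k/b^n)+C'b^{-\alpha n}]$, each covering the portion of $\Gr(a_{\mathbf{s}})$ above $I_{n,k}$. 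Arguing as in Lemma~\ref{lem:set-Fn} the family $\{F_{n,k}\}$ is nested across levels, so a Borel measure can be defined by $\mu(F_{n,k})=b^{-n}$ and normalized to $\tilde\mu = b^{n_0}\mu|_{F_{n_0,k_0}}$, supported on (the closure of) $\Gr_{u_0,v_0}(a_{\mathbf{s}})$.

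The crucial and most delicate step is verifying $\tilde\mu(S) \leq C|U|^{2-\alpha}$ for small sets $S\supset U$. Given $|U|\in [b^{-n-1},b^{-n})$, I enclose $U$ in a square $S$ of side $b^{-n}$ and choose the fine level $k=\lceil n/\alpha\rceil$, so that the ladder separation $cb^{-\alpha k}$ is comparable to the height $b^{-n}$ of $S$. The $x$-width of $S$ alone permits at most $O(b^{k-n})$ cells $F_{k,z}$ to intersect it, but the hypothesis of the theorem supplies a much tighter $y$-count: along the ladder $(t_m)$, the values $a_{\mathbf{s}}(t_m b^{-k})$ are strictly increasing with consecutive gaps exceeding $cb^{-\alpha k}$, so at most $O(b^{-n}/b^{-\alpha k})=O(1)$ of them lie in any vertical strip of height comparable to $b^{-n}$. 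Since $(zb^{-k}, a_{\mathbf{s}}(zb^{-k}))\in F_{k,z}$, only $O(1)$ such cells meet $S$, whence
\[
\tilde\mu(S) \leq O(1)\cdot b^{n_0-k} \leq C\, b^{-n/\alpha} \leq C'|U|^{2-\alpha},
\]
where the final inequality uses the elementary rearrangement $1/\alpha \geq 2-\alpha$, equivalent to $(\alpha-1)^2\geq 0$. Lemma~\ref{lem:mdp} then gives $\dimH \Gr_{u_0,v_0}(a_{\mathbf{s}}) \geq 2-\alpha$, completing the proof.

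The principal technical obstacle will be ensuring that the ladder-based $y$-separation argument controls \emph{all} cells $F_{k,z}$ meeting $S$ rather than only those whose index $z$ lies in the ladder $(t_m)$; this is the exact analogue of the role played by Lemma~8 of \cite{LCWW17} (strict monotonicity of $a$ at every $4^{-k}$-grid point, with uniform gap $2^{-k}$) in the proof of Theorem~\ref{mainthm:1}, and is precisely the function of the hypothesis on $(t_n)$ in Theorem~\ref{mainthm:2}. A secondary bookkeeping issue is checking that the uniform constant in $|c(j,x)|\leq C$ is genuinely independent of $j$ when $\beta=0$, which should follow from applying \eqref{eq:ql-2} with $n=\lfloor b^{j-1}x\rfloor$ and absorbing the boundary case $n=0$ into the constant.
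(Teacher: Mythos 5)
Your proposal does not prove the statement it was asked to prove. The statement is Lemma \ref{thm:cite-quasi-linear} itself --- the series representation $a_{\mathbf{s}}(x)=\sum_{j\geq 1}c(j,x)\,b^{-\alpha j}$ and the upper bound $\overline{\dim}_{\mathrm{B}}\Gr_{u,v}(\lambda_{\mathbf{s}})\leq\max\{2-(\alpha-\beta),1\}$ --- whereas your argument takes both parts of that lemma as given (you invoke part (2) in your opening sentence to obtain the upper bound, and part (1) to define your step functions $f_n$) and then proceeds to prove Theorem \ref{mainthm:2}. As a proof of the lemma this is circular; as mathematics it is a reasonable reconstruction of the paper's proof of Theorem \ref{mainthm:2}, but that is a different statement. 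Note also that the paper itself offers no proof of this lemma: it is quoted from \cite{LCWW20} (Prop.~10 and Cor.~18), so the benchmark here is whether you can supply the missing argument, not whether you can use the lemma.

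A self-contained proof requires two elementary arguments, neither of which appears in your proposal. For part (1), telescope:
\[
\frac{s(b^k x)}{b^{\alpha k}}-s(x)=\sum_{j=1}^{k}\Bigl(\frac{s(b^{j}x)}{b^{\alpha j}}-\frac{s(b^{j-1}x)}{b^{\alpha (j-1)}}\Bigr)=\sum_{j=1}^{k}c(j,x)\,b^{-\alpha j},
\]
observe that \eqref{eq:ql-2} gives $|c(j,x)|\leq C(b^{j-1}x)^{\beta}$, so the series converges absolutely because $\beta<\alpha$, and the left-hand side tends to $x^{\alpha}\lambda_{\mathbf{s}}(x)-s(x)=a_{\mathbf{s}}(x)$ as $k\to\infty$; this simultaneously shows that the limit defining $\lambda_{\mathbf{s}}$ exists. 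For part (2), the same tail estimate yields an oscillation bound of order $b^{-n(\alpha-\beta)}$ for $a_{\mathbf{s}}$ on $b$-adic intervals of length $b^{-n}$, which transfers to $\lambda_{\mathbf{s}}$ on $[u,v]$ because $s$ is locally constant there and $x\mapsto x^{-\alpha}$ is Lipschitz on $[u,v]$; the standard box-counting criterion for graphs of continuous functions (e.g.\ \cite{F2004}) then gives the claimed bound $\max\{2-(\alpha-\beta),1\}$. The machinery you develop --- the measure $\tilde\mu$, the ladder $(t_n)$, the mass distribution principle --- is the lower-bound half of Theorem \ref{mainthm:2} and is not relevant to this lemma.
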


Under a slightly weaker condition, we show that $2-\alpha$ is a lower bound of Hausdorff dimension of the graph of $\lambda_\mathbf{s}$. An integer sequence $(t_n)_{n\geq 1}$ is called \emph{syndetic} if the difference sequence $\bigl(t_{n+1}-t_{n}\bigr)_{n\geq 1}$ is bounded. Precisely, we prove the following result.

\begin{thmbis}{mainthm:2}
    Let $\mathbf{s}$ be a quasi-linear sequence with $\alpha(\mathbf{s})=\alpha$ and $\beta(\mathbf{s})=0$. Assume that there is an (increasing) syndetic sequence $(t_n)_{n\geq 1}$ and a constant $c>0$ such that 
    \begin{equation}\label{eq:mainthm-2}
        a_{\mathbf{s}}(t_{n+1}b^{-k})-a_{\mathbf{s}}(t_{n}b^{-k})>c\,b^{-\alpha k}
    \end{equation} 
    for all $k\geq 1$ and for all $1\leq t_n < t_{n+1}\leq b^k-1$. Then $\dimH\Gr\lambda_\mathbf{s} = 2-\alpha$.
\end{thmbis}

\subsection{Proof of Theorem \ref{mainthm:2}}
We adopt the same strategy to prove Theorem \ref{mainthm:2}. First, we establish the relationship between the Hausdorff dimensions of the graphs of \( a_{\mathbf{s}}(x) \) and \( \lambda_{\mathbf{s}}(x) \). Then, we construct a mass distribution on the graph of \( a_{\mathbf{s}}(x) \), and use the mass distribution principle to derive a lower bound for the Hausdorff dimension of the graph of \( a_{\mathbf{s}}(x) \).

\begin{lemma}\label{lem:quasi-a-lambda}
    For all $u,v\in(0,1)$ with $u<v$, $\dimH\Gr_{u,v}(a_\mathbf{s})\leq \dimH\Gr_{u,v}(\lambda_\mathbf{s})$.
\end{lemma}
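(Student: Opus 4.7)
The plan is to mimic the Lipschitz-equivalence argument from Lemma~\ref{lem:a-lambda}, with $\sqrt{x}$ replaced by $x^{\alpha}$. I define the bijection $\phi:\Gr_{u,v}(a_{\mathbf{s}})\to \Gr_{u,v}(\lambda_{\mathbf{s}})$ by $(x,a_{\mathbf{s}}(x))\mapsto (x,\lambda_{\mathbf{s}}(x))$ and verify that its inverse is Lipschitz, so that by the standard dimension estimate for Lipschitz images
\[
\dimH \Gr_{u,v}(a_{\mathbf{s}}) = \dimH \phi^{-1}\bigl(\Gr_{u,v}(\lambda_{\mathbf{s}})\bigr) \leq \dimH \Gr_{u,v}(\lambda_{\mathbf{s}}).
\]

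The first observation is that $(u,v)\subset (0,1)$ forces $\lfloor x\rfloor = 0$ for every $x\in(u,v)$, so $s(x)=s(0)$ is constant on this interval and $a_{\mathbf{s}}(x)=x^{\alpha}\lambda_{\mathbf{s}}(x)-s(0)$. Adding and subtracting $x_{1}^{\alpha}\lambda_{\mathbf{s}}(x_{2})$ then gives
\[
a_{\mathbf{s}}(x_{1})-a_{\mathbf{s}}(x_{2}) = x_{1}^{\alpha}\bigl(\lambda_{\mathbf{s}}(x_{1})-\lambda_{\mathbf{s}}(x_{2})\bigr) + \lambda_{\mathbf{s}}(x_{2})\bigl(x_{1}^{\alpha}-x_{2}^{\alpha}\bigr),
\]
which is the direct analogue of the decomposition for $a(x)=\sqrt{x}\lambda(x)-\rho(0)$ in Lemma~\ref{lem:a-lambda}.

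Two quantitative bounds on $(u,v)$ then close the argument: first, $x\mapsto x^{\alpha}$ is Lipschitz on $(u,v)$ with some constant $C_{1}$, which is immediate since $(u,v)$ is bounded away from $0$ and $\alpha>0$, so $|\alpha x^{\alpha-1}|$ is bounded there; second, $\lambda_{\mathbf{s}}$ is bounded on $(u,v)$, say $|\lambda_{\mathbf{s}}|\leq C_{2}$. For the latter I would use the series representation of Lemma~\ref{thm:cite-quasi-linear}(1) together with the quasi-linearity bound $|c(j,x)|\leq C(b^{j-1}x)^{\beta}$, which gives locally uniform convergence of the series and hence continuity of $a_{\mathbf{s}}$ on $(u,v)$; boundedness of $\lambda_{\mathbf{s}}(x)=(a_{\mathbf{s}}(x)+s(0))/x^{\alpha}$ on the compact subinterval $[u,v]$ follows. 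Applying $(p+q)^{2}\leq 2p^{2}+2q^{2}$ to the decomposition yields
\[
(a_{\mathbf{s}}(x_{1})-a_{\mathbf{s}}(x_{2}))^{2} \leq 2v^{2\alpha}(\lambda_{\mathbf{s}}(x_{1})-\lambda_{\mathbf{s}}(x_{2}))^{2} + 2C_{1}^{2}C_{2}^{2}(x_{1}-x_{2})^{2},
\]
and adding $(x_{1}-x_{2})^{2}$ to both sides gives the desired Lipschitz estimate with constant $C_{3}=\sqrt{\max\{1+2C_{1}^{2}C_{2}^{2},\, 2v^{2\alpha}\}}$.

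The only potential obstacle is the verification that $\lambda_{\mathbf{s}}$ is bounded (equivalently, continuous) on $(u,v)$, which is not recorded explicitly in Lemma~\ref{thm:cite-quasi-linear}; but this follows routinely from the quasi-linear hypothesis via the series representation, and the rest of the argument is an almost mechanical translation of Lemma~\ref{lem:a-lambda} to the quasi-linear setting.
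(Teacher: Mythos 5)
Your proposal is correct and follows essentially the same route as the paper: both show that the map $(x,\lambda_{\mathbf{s}}(x))\mapsto(x,a_{\mathbf{s}}(x))$ is Lipschitz on $[u,v]$ using the constancy of $s$ on $(0,1)$, the elementary splitting of $x_1^{\alpha}\lambda_{\mathbf{s}}(x_1)-x_2^{\alpha}\lambda_{\mathbf{s}}(x_2)$, the mean value theorem for $x\mapsto x^{\alpha}$, and boundedness of $\lambda_{\mathbf{s}}$ on the compact interval. The only cosmetic difference is that you justify the boundedness of $\lambda_{\mathbf{s}}$ via the series representation of $a_{\mathbf{s}}$, whereas the paper simply invokes its continuity; both are adequate.
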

\begin{proof}
    We first show that the mapping $T_s:\,(x,\lambda_{\mathbf{s}}(x))\mapsto (x,a_{\mathbf{s}}(x))$ is Lipschitz on any closed interval $[u,v]\subset (0,1)$. For any $x_1,x_2\in[u,v]$,
    \begin{align*}
        &\ |T_s(x_1,\lambda_{\mathbf{s}}(x_1))-T_s(x_2,\lambda_{\mathbf{s}}(x_2))|^2 \\
        = &\ (x_1-x_2)^2+(a_{\mathbf{s}}(x_1)-a_{\mathbf{s}}(x_2))^2\\
         = &\  (x_1-x_2)^2+(x_1^\alpha\lambda_{\mathbf{s}}(x_1)-x_2^\alpha\lambda_{\mathbf{s}}(x_2))^2\\
        \leq &\ (x_1-x_2)^2 + 2(x_1^\alpha-x_2^{\alpha})^2\lambda_{\mathbf{s}}^2(x_1)+2x_2^{2\alpha}(\lambda_{\mathbf{s}}(x_1)-\lambda_{\mathbf{s}}(x_2))^2.
    \end{align*}
    Since $\lambda_{\mathbf{s}}(\cdot)$ is continuous, $\lambda_{\mathbf{s}}(x_1)\leq \max_{x\in[u,v]}\lambda_{\mathbf{s}}(x)=:D_0$. The mean value theorem yields that there is an $x_0\in (x_1,x_2)\subset [u,v]$ such that  \[\frac{x_1^\alpha-x_2^{\alpha}}{x_1-x_2} = \alpha x_0^{\alpha - 1}\leq \alpha\max\{u^{\alpha -1}, v^{\alpha -1}\}=:D_1.\] Thus $(x_1^\alpha-x_2^{\alpha})^2\leq D_1^2(x_1-x_2)^2$. Note also $x_2^{2\alpha}\leq v^{2\alpha}$. We have 
        \begin{align*}
        &\ |T_s(x_1,\lambda_{\mathbf{s}}(x_1))-T_s(x_2,\lambda_{\mathbf{s}}(x_2))|^2 \\
        \leq &\ (x_1-x_2)^2 + 2(x_1^\alpha-x_2^{\alpha})^2\lambda_{\mathbf{s}}^2(x_1)+2x_2^{2\alpha}(\lambda_{\mathbf{s}}(x_1)-\lambda_{\mathbf{s}}(x_2))^2\\
        \leq & \ (1+ 2D_0^2D_1^2)(x_1-x_2)^2+2v^{2\alpha}(\lambda_{\mathbf{s}}(x_1)-\lambda_{\mathbf{s}}(x_2))^2\\
        \leq & \ D_3\bigl[(x_1-x_2)^2+(\lambda_{\mathbf{s}}(x_1)-\lambda_{\mathbf{s}}(x_2))^2\bigr]
    \end{align*}
    where $D_3 = \max\{1+ 2D_0^2D_1^2, 2v^{2\alpha}\}$. This shows that $T_s$ is Lipschitz. Since $\Gr_{u,v}(a_{\mathbf{s}})=T_s(\Gr_{u,v}(\lambda_{\mathbf{s}}))$, the result follows.
\end{proof} 

For all $n\geq 1$, let $\{g_n(x)\}$ be a sequence of simple functions defined as \[g_n(x)=\sum_{j=1}^{n}c(j,x)\,b^{-\alpha j}.\]
Note that for all $x\in I_{n,\,k}=[\frac{k}{b^{n}},\frac{k+1}{b^n})$, $\lfloor b^{j}x\rfloor = \lfloor b^{j-n}k\rfloor$ for $j=1,2,\dots,n$. Thus for all $x\in I_{n,\,k}$ and $j=1,2,\dots,n$, \[c(j,x)=s(b^jx)-b^{\alpha}s(b^{j-1}x)=s(b^{j-n}k)-b^{\alpha}s(b^{j-n-1}k)=c(j,k/b^{n}).\] This shows that $g_n(x)$ takes the constant value $g_n(\tfrac{k}{b^{n}})$ on each $b$-adic interval $I_{n,\,k}$. Therefore, $g_n(x)$ is a step function and its graph is composed of $b^n$ horizontal line segments $I_{n,k}\times \{g_n(\tfrac{k}{b^{n}})\}$. Let $E_{n,k}$ be a rectangle with $I_{n,k}\times \{g_n(\tfrac{k}{b^{n}})\}$ as the mid-line defined as follows 
\[
E_{n,\,k}:=I_{n,\,k}\times [g_n(\tfrac{k}{b^{n}})-D_4 C b^{-n(\alpha-\beta)},\,g_n(\tfrac{k}{b^{n}})+D_4 C b^{-n(\alpha-\beta)}]
\]
where 
\begin{equation}\label{eq:d4}
    D_4 := \Bigl\lfloor\frac{3}{b^{\alpha-\beta}-1}\Bigr\rfloor+1.
\end{equation}
Define 
\begin{equation}\label{eq:set-En}
    \mathcal{E}_n=\{E_{n,k}\}_{k=0}^{b^n-1}\quad\text{and}\quad E_n=\bigcup_{k=0}^{b^n-1}E_{n,\,k}.
\end{equation}
From the construction of $F_n$, we see that $\Gr(g_n)\subset E_n$. The upper bound of variation of $g_n(x)$ on $b$-adic intervals is given below.

\begin{lemma}\label{lem:15}
    For all $n\geq 1$, $x\in [0,1]$ and $y\in I_{n-1}(x)$, 
    \[
        |g_n(x)-g_n(y)|\leq 2C b^{-n(\alpha-\beta)}\quad \text{and}\quad |g_n(x)-g_{n-1}(y)|\leq 3C b^{-n(\alpha-\beta)},
    \]
    where the constant $C$ is given in Equation \eqref{eq:ql-2}
\end{lemma}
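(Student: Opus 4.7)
The plan is to exploit the fact, established in the paragraph preceding Lemma~\ref{lem:15}, that each $g_n$ is constant on every level-$n$ $b$-adic interval $I_{n,k}$. Combined with the quasi-linearity bound \eqref{eq:ql-2}, this reduces each of the two inequalities to controlling a single ``top'' term indexed by $j = n$, after all lower-order terms in $g_n(x) = \sum_{j=1}^{n} c(j,x)\,b^{-\alpha j}$ cancel.

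For the first bound I set $I_{n-1}(x) = I_{n-1,m}$ and pick $i, i' \in \{0,1,\ldots,b-1\}$ with $x \in I_{n,bm+i}$ and $y \in I_{n,bm+i'}$. By the constancy of $g_n$ on each level-$n$ interval I may replace $x$ and $y$ by $(bm+i)/b^n$ and $(bm+i')/b^n$. The key observation is that for $j \leq n-1$, both $\lfloor b^j x\rfloor$ and $\lfloor b^{j-1} x\rfloor$ depend only on $m$ — the image $b^j I_{n-1,m}$ has length $b^{j-n+1} \leq 1$ and thus contains at most one integer — so $c(j,x) = c(j,y)$ for $j = 1,\ldots,n-1$. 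Only the $j = n$ term survives:
\[
g_n(x) - g_n(y) = \bigl(s(bm+i) - s(bm+i')\bigr)\,b^{-\alpha n}.
\]
Applying \eqref{eq:ql-2} twice with the triangle inequality yields $|s(bm+i) - s(bm+i')| \leq 2Cm^\beta \leq 2Cb^{(n-1)\beta} \leq 2Cb^{n\beta}$, and the first bound follows.

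The second bound uses the same cancellation mechanism. Since $g_{n-1}$ is constant on $I_{n-1,m}$ I may take $y = m/b^{n-1}$; then $c(j,x) = c(j,y)$ for $j = 1,\ldots,n-1$ exactly as above, and
\[
g_n(x) - g_{n-1}(y) = c(n,x)\,b^{-\alpha n} = \bigl(s(bm+i) - b^\alpha s(m)\bigr)\,b^{-\alpha n}.
\]
A single application of \eqref{eq:ql-2} gives $|c(n,x)| \leq Cm^\beta \leq Cb^{(n-1)\beta}$, which produces a bound of $Cb^{-n(\alpha-\beta)}$, comfortably below the claimed $3Cb^{-n(\alpha-\beta)}$.

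There is no serious obstacle: the entire argument is an elementary computation driven by the ``sub-cancellation'' property of the $c(j,\cdot)$ on $b$-adic intervals and a direct appeal to \eqref{eq:ql-2}. The only minor care concerns the edge case $m = 0$, where the bound $m^\beta \leq b^{(n-1)\beta}$ is either trivially $0$ (when $\beta>0$) or a harmless constant (when $\beta=0$) that is absorbable into $C$.
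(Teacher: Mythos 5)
Your proof is correct and follows essentially the same route as the paper: the terms $c(j,\cdot)$ for $j\le n-1$ cancel for $x,y$ in the same interval $I_{n-1,m}$, and the surviving top term is controlled by the quasi-linearity bound \eqref{eq:ql-2}. The only difference is in the second inequality, where you note that $g_{n-1}(y)=g_{n-1}(x)$ and thus bound $|c(n,x)|b^{-\alpha n}$ directly, obtaining the sharper constant $C$ in place of $3C$, whereas the paper inserts $g_n(y)$ and applies the triangle inequality; both of course establish the stated bound.
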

\begin{proof}
    By the definition of the quasi-linear sequence, there is a constant $C>0$ such that for all $x\in [0,1]$ and $n\geq 1$, 
    \[
        |c(n,x)| = |s(b^n x)-b^{\alpha}s(b^{n-1}x)|\leq C (b^{n-1}x)^{\beta}\leq C b^{n\beta}.
    \] 
    For $y\in I_{n-1}(x)$, we see that $c(j,x)=c(j,y)$ for $j=1,2,\dots,n-1$. Then 
    \begin{align*}
        |g_n(x)-g_n(y)| & = \Bigl|\sum_{j=1}^{n}c(j,x)\,b^{-\alpha j}-\sum_{j=1}^{n}c(j,y)\,b^{-\alpha j}\Bigr|\\
        & = |c(n,x)b^{-n\alpha}-c(n,y)b^{-n\alpha}| \\
        & \leq 2 C b^{-n(\alpha-\beta)}
    \end{align*}
    and 
    \begin{align*}
        |g_n(x)-g_{n-1}(y)| & \leq |g_n(x)-g_n(y)| + |g_n(y)-g_{n-1}(y)|\\
        & \leq 2 C b^{-n(\alpha-\beta)} + |c(n,y)|\cdot b^{-n\alpha}\\
        & \leq 3 C b^{-n(\alpha-\beta)}.
    \end{align*}
    The results follow.
\end{proof}
Now we show $\{\mathcal{E}_n\}$ has a nested structure.
\begin{lemma}\label{lem:nested-2}
    Let $\{\mathcal{E}_n\}_{n\geq 1}$ be families of sets given in Equation \eqref{eq:set-En}. Then for all $n\geq 1$, \[E_{n+1}\subset E_n \quad \text{and} \quad E_{n+1,\,bk+i}\subset E_{n,\,k}\] for $k=0,1,\dots,b^n-1$ and $i=0,1,\dots,b-1$.
\end{lemma}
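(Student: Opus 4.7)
The plan is to mirror the argument used in Lemma \ref{lem:set-Fn}, with the constant $D_4$ now playing the role that the factor of $2$ played in the $\lambda$-case. The horizontal containment $I_{n+1,bk+i}\subset I_{n,k}$ is immediate from the definition of $b$-adic intervals, so the entire task reduces to showing that the vertical bands nest.

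First, I would fix $k\in\{0,1,\dots,b^n-1\}$ and $i\in\{0,1,\dots,b-1\}$, set $x=(bk+i)/b^{n+1}$ and $y=k/b^n$, and observe that $y\in I_{n}(x)$. Then I would invoke Lemma \ref{lem:15} (applied at level $n+1$) to obtain
\[
|g_{n+1}(x)-g_{n}(y)|\leq 3Cb^{-(n+1)(\alpha-\beta)}.
\]
This is the sole quantitative input needed: it controls the displacement between the center heights of the child rectangle $E_{n+1,bk+i}$ and the parent rectangle $E_{n,k}$.

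The main (and essentially only) obstacle is to verify that $D_4$ was engineered precisely so that this displacement is absorbed by the difference in vertical half-widths across consecutive levels. Concretely, the inclusion $E_{n+1,bk+i}\subset E_{n,k}$ holds provided
\[
3Cb^{-(n+1)(\alpha-\beta)}+D_4 C b^{-(n+1)(\alpha-\beta)}\leq D_4 C b^{-n(\alpha-\beta)},
\]
which rearranges to $D_4(b^{\alpha-\beta}-1)\geq 3$. Since $\alpha>\beta$ and
\[
D_4=\Bigl\lfloor\frac{3}{b^{\alpha-\beta}-1}\Bigr\rfloor+1>\frac{3}{b^{\alpha-\beta}-1},
\]
this inequality holds by design, and the vertical containment follows. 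Taking unions over $i=0,\dots,b-1$ and $k=0,\dots,b^n-1$ then yields $E_{n+1}\subset E_n$, completing the argument. I anticipate no subtlety beyond bookkeeping the constants.
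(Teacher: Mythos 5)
Your proposal is correct and follows essentially the same route as the paper: both rely on Lemma \ref{lem:15} for the bound $|g_{n+1}(x)-g_n(y)|\leq 3Cb^{-(n+1)(\alpha-\beta)}$ and on the defining property $D_4(b^{\alpha-\beta}-1)\geq 3$ of $D_4$. The only (cosmetic) difference is that you package the vertical containment as the single inequality ``center displacement plus child half-width $\leq$ parent half-width,'' whereas the paper checks the top and bottom edges separately; these are equivalent.
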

\begin{proof}
    Let $x,y\in I_{n,\,k}$. The distance between the top edges of the rectangles $E_{n,\,k}$ and $E_{n+1,\,bk+i}$ satisfies
    \begin{align*}
        & \ \bigl(g_n(x)+D_4Cb^{-n(\alpha-\beta)}\bigr) - \bigl(g_{n+1}(y)+D_4Cb^{-(n+1)(\alpha-\beta)}\bigr) \\
        = & \ \bigl(g_n(x)-g_{n+1}(y)\bigr) + \bigl(D_4Cb^{-n(\alpha-\beta)}-D_4Cb^{-(n+1)(\alpha-\beta)}\bigr)\\
        \geq & \ -3Cb^{-(n+1)(\alpha-\beta)} + D_4C(b^{\alpha-\beta}-1)b^{-(n+1)(\alpha-\beta)}\quad \text{(by Lemma \ref{lem:15})}\\
        = & \ b^{-(n+1)(\alpha-\beta)}C\bigl(D_4(b^{\alpha-\beta}-1)-3\bigr) \geq 0
    \end{align*}
    where the last inequality follows directly from the definition of $D_4$ in \eqref{eq:d4}.  
    
    Similarly, the distance between the bottom edges of the rectangles $E_{n,\,k}$ and $E_{n+1,\,bk+i}$ satisfies
    \begin{align*}
        & \ \bigl(g_n(x)-D_4Cb^{-n(\alpha-\beta)}\bigr) - \bigl(g_{n+1}(y)-D_4Cb^{-(n+1)(\alpha-\beta)}\bigr) \\
        = & \ \bigl(g_n(x)-g_{n+1}(y)\bigr) + \bigl(D_4Cb^{-(n+1)(\alpha-\beta)}-D_4Cb^{-n(\alpha-\beta)}\bigr)\\
        \leq & \ 3Cb^{-(n+1)(\alpha-\beta)} + D_4C(1-b^{\alpha-\beta})b^{-(n+1)(\alpha-\beta)}\quad \text{(by Lemma \ref{lem:15})}\\
        = & \ b^{-(n+1)(\alpha-\beta)}C\bigl(3-D_4(b^{\alpha-\beta}-1)\bigr) \leq 0. \qquad \text{(by Definition \eqref{eq:d4})}
    \end{align*}

    Note also that $I_{n+1\, bk+i}\subset I_{n,\, k}$. We have $E_{n+1,\,bk+i}\subset E_{n,\,k}$ for all $i=0,1,\dots,b-1$ and 
    \[
        E_{n+1} = \bigcup_{k=0}^{b^{n+1}-1}E_{n+1,\,k} = \bigcup_{k=0}^{b^{n}-1}\bigcup_{i=0}^{b-1}E_{n+1,\,bk+i} \subset \bigcup_{k=0}^{b^{n}-1}E_{n,\,k} = E_n. \qedhere
    \]
\end{proof}

Now we are ready to prove Theorem \ref{mainthm:2}.

\begin{proof}[Proof of Theorem \ref{mainthm:2}]
    According to Lemma \ref{lem:quasi-a-lambda}, we only need to show $\dimH\Gr_{u,v}(a_{\mathbf{s}})\geq 2-\alpha$ for all $b$-adic intervals $I_{n,\,k}=[u,v]$. Then $\dimH\Gr_{u,v}(\lambda_{\mathbf{s}})\geq 2-\alpha$. Combining with Lemma \ref{thm:cite-quasi-linear}, in the case $\beta(s)=0$, we see that the Hausdorff dimension of the graph of $\lambda_{\mathbf{s}}$ over any sub-interval of $[0,1]$ equals $2-\alpha$.
    
    Note that $\mathcal{E}_{n}=\{E_{n,0}, E_{n,1}, \dots, E_{n,b^n-1}\}$ is a collection of disjoint sets. We define a mass distribution $\mu$ on $E_1$ by assigning $\mu(E_{n,k})=b^{-n}$ for all $n\geq 1$ and $k=0,1,\dots,b^{n}-1$. According to Lemma \ref{lem:nested-2}, we see that \[1=\mu(E_n)=\sum_{k=0}^{4^n-1}\mu(E_{n,k})~ \text{ and }~ \mu(E_{n,k})=\sum_{i=0}^{b-1}\mu(E_{n+1,bk+i}).\] For each $n\geq 1$, we define $\mu(A)=0$ for all Borel sets $A$ with $A\cap E_n=\varnothing$. Then $\mu$ can be extended to a Borel measure on $\mathbb{R}^2$ and the support of $\mu$ is contained in $\cap_{n=1}^{\infty}\bar{E}_n$. 

    For $x\in I_{n,\,k}=[u,v]$, we have 
    \begin{align*}
        |a_{\mathbf{s}}(x)-g_n(x)| & = \Bigl|\sum_{j=1}^{\infty}c(j,x)b^{-j\alpha}-\sum_{j=1}^{n}c(j,x)b^{-j\alpha}\Bigr|\\
        & = \Bigl|\sum_{j=n+1}^{\infty}c(j,x)b^{-j\alpha}\Bigr|\\
        & \leq Cb^{n\beta}\sum_{j=n+1}^{\infty}b^{-j\alpha} = \frac{C}{b^{\alpha}-1}b^{-n(\alpha-\beta)} \\ 
        & \leq D_4Cb^{-n(\alpha-\beta)}.
    \end{align*}
    Therefore, it follows from the definition of $E_{n,k}$ that $\Gr_{u,v}(a_{\mathbf{s}})\subset E_{n,\,k}$ and $\Gr(a_{\mathbf{s}})\subset E_n$ for all $n$. Thus, $\Gr(a_{\mathbf{s}})\subset\cap_{n=1}^{\infty}E_{n}$. 
    
    Let $\tilde{\mu}$ be the restriction of $\mu$ on $E_{n,k}$. Namely, for all Borel sets $A$, \[\tilde{\mu}(A):=\mu(A\cap E_{n,k})/\mu(E_{n,k})=b^{n}\mu(A\cap E_{n,k}).\] For any Borel set $U$ with $|U|<1$, we have $b^{-m-1}\leq |U| < b^{-m}$ for some integer $m\geq n$. Then $U$ is covered by a square, denoted by $S$, of side length $b^{-m}$. It follows from our assumption \eqref{eq:mainthm-2} that for all $m\geq 1$, \[a_{\mathbf{s}}(t_{n+1}b^{-m})-a_{\mathbf{s}}(t_n b^{-m})>cb^{-k\alpha}\] holds for all $1\leq t_n<t_{n+1}\leq b^{m}-1$. This implies that the distance of the top edges of $E_{k,t_{n+1}}$ and $E_{m,t_n}$ is large than $cb^{-m\alpha}$. Then the square $S$ intersects at most $\frac{b^{-m}}{cb^{-m\alpha}}$ rectangles $E_{m,t_n}$. Since the sequence $(t_n)$ is syndetic, there exists $M>0$ such that $t_{n+1}-t_n\leq M$ for all $n\geq 1$. Thus, there are at most $M$ rectangles $E_{m,*}$ between $E_{m,t_n}$ and $E_{m,t_{n+1}}$ that intersects $S$. So the square $S$ intersects at most $\frac{b^{-m}}{cb^{-m\alpha}}\cdot M$ rectangles $E_{m,*}$. Recall that when $E_{m,*}\subset E_{n,k}$, we have $\tilde{\mu}(E_{k,*})=b^{n-m}$. Consequently, 
    \begin{align*}
        \tilde{\mu}(U) \leq \tilde{\mu}(S) & \leq \frac{b^{-m}}{cb^{-m\alpha}}\cdot M \cdot b^{n-m}\\
        &  = \frac{b^n M}{c}\cdot b^{-(2-\alpha)m}\\
        & \leq \frac{b^nb^{2-\alpha} M}{c}\cdot |U|^{2-\alpha}.
    \end{align*} 
    By Lemma \ref{lem:mdp} (the Mass Distribution Principle), we have $\dimH\Gr_{u,v}(a_{\mathbf{s}}) \geq 2-\alpha$.
\end{proof}

\subsection{Examples}
Although our goal is to generalize Theorem \ref{mainthm:1} to encompass more quasi-linear sequences, not all quasi-sequences satisfy the conditions of Theorem \ref{mainthm:2}: (1) $\alpha>0$ and $\beta=0$; (2) Condition \eqref{eq:mainthm-2}. To elaborate, we provide three illustrative examples: Example \ref{eg:16} satisfies both (1) and (2), whereas Example \ref{eg:18} fails to satisfy (1), and Example \ref{eg:19} satisfies (1) but fails to satisfy (2).

\begin{example}[Sum of the Thue-Morse sequence]\label{eg:16}
    Let $\mathbf{m}=(m(n))_{n\geq 0}$ be the Thue-Morse sequence defined by $m(0)=0$, $m(2i)=m(i)$ and $m(2i+1)=1-m(i)$ for all $i\geq 0$. Denote by $\mathbf{s}=(s(n))_{n\geq 1}$ the sum of $\mathbf{m}$ where
    \begin{equation*}
        s(n)=\sum_{i=0}^{n-1}m(i).
    \end{equation*}
    Then $\alpha(\mathbf{s})=1$ and $\beta(\mathbf{s})=0$. The sequence $(s(n))_{n\geq 1}$ is quasi-linear (in base $b=2$). Next we show that the condition \eqref{eq:mainthm-2} holds for $(t_n=2n)_{n\geq 1}$. For any $k\geq 1$ and $t_n,\,t_{n+1}< 2^k$,
    \begin{align*}
        a_{\mathbf{s}}(t_{n+1}2^{-k}) - a_{\mathbf{s}}(t_n2^{-k}) & = \bigl(t_{n+1}2^{-k}\cdot\lambda_{\mathbf{s}}(t_{n+1}2^{-k}) - s(0) \bigr) - \bigl(t_n2^{-k}\cdot\lambda_{\mathbf{s}}(t_n2^{-k}) - s(0) \bigr)\\
        & = t_{n+1}2^{-k}\cdot\lambda_{\mathbf{s}}(t_{n+1}2^{-k}) - t_n2^{-k}\cdot\lambda_{\mathbf{s}}(t_n2^{-k})\\
        & = \lim_{j\to\infty}\frac{s(t_{n+1}2^{j-k})}{2^{j-k}} - \lim_{j\to\infty}\frac{ s(t_n2^{j-k})}{2^{j-k}}\\
        & =\lim_{j\to\infty}\frac{s(t_{n+1}2^{j-k})-s(t_n2^{j-k})}{2^{j}}.
    \end{align*}
    Note that for all $j\geq 1$, $m(2j)+m(2j+1)=m(j)+1-m(j)=1$. Thus for all $j> k$, 
    \begin{align*}
        s(t_{n+1}2^{j-k})-s(t_n2^{j-k}) & = m(t_n2^{j-k})+\cdots+m(t_{n+1}2^{j-k}-1)\\
        & = \sum_{\ell=0}^{2^{j-k}-1}\bigl[m(t_n2^{j-k}+2\ell)+m(t_n2^{j-k}+2\ell+1)\bigr] \\
        & = \sum_{\ell=0}^{2^{j-k}-1} 1 = 2^{j-k}.
    \end{align*}
    Thus, for all \( k \geq 1 \), we have  
    \[
    a_{\mathbf{s}}\left(t_{n+1}2^{-k}\right) - a_{\mathbf{s}}\left(t_n2^{-k}\right) = 2^{-k},
    \]  
    and condition \eqref{eq:mainthm-2} is therefore satisfied. By Theorem \ref{mainthm:2}, it follows that the Hausdorff dimension of the graph of the limit function of the sum of the Thue-Morse sequence is \( 1 \).
\end{example}
\begin{example}[Double sum of the Thue-Morse sequenc]\label{eg:18}
    Denoted by $\mathbf{s}'$ the sum of $\mathbf{s}$ given in Example \ref{eg:16}. It is also a quasi-linear sequence (in base $2$). However, $\alpha(\mathbf{s}')=2$ and $\beta(\mathbf{s}')=1\neq 0$. In this case, Theorem \ref{mainthm:2} can not be applied.
\end{example}

\begin{example}[Condition \eqref{eq:mainthm-2} might fail to be satisfied]\label{eg:19}
    Let $s(n)=\sum_{i=0}^{n-1}r(i)$ be the sum of the Rudin-Shapiro sequence $\mathbf{r}$ investigated in \cite{BM1978,BEM1983}. Then $\mathbf{s}=(s(n))_{n\geq 1}$ is quasi-linear (in base $b=4$) with $\alpha(\mathbf{s})=1/2$ and $\beta(\mathbf{s})=0$. 
    
    Let $(t_n)_{n\ge 1}$ be an increasing syndetic sequence. Namely, there exists constants $L>0$ such that $t_{n+1}-t_n\leq L$ for all $n\geq 1$. Let $k\geq 1$ and $n\geq 1$ with $1\leq t_n<t_{n+1}<4^{k}$. 
    A calculation similar as in Example \ref{eg:16} yields that 
    \begin{align}
        a_{\mathbf{s}}(t_{n+1}4^{-k}) - a_{\mathbf{s}}(t_n4^{-k}) & = \bigl(t_{n+1}2^{-k}\cdot\lambda_{\mathbf{s}}(t_{n+1}2^{-k}) - s(0) \bigr) - \bigl(t_n2^{-k}\cdot\lambda_{\mathbf{s}}(t_n2^{-k}) - s(0) \bigr)\notag\\
        & = t_{n+1}2^{-k}\cdot\lambda_{\mathbf{s}}(t_{n+1}2^{-k}) - t_n2^{-k}\cdot\lambda_{\mathbf{s}}(t_n2^{-k})\notag\\
        & =  t_{n+1}4^{-k}\lim_{j\to\infty}\frac{s(t_{n+1}4^{j-k})}{\sqrt{t_{n+1}4^{j-k}}} - t_n 4^{-k}\lim_{j\to\infty}\frac{ s(t_n4^{j-k})}{\sqrt{t_n4^{j-k}}}\notag\\
        & \leq 2^{-k}\lim_{j\to\infty}\frac{s(t_{n+1}4^{j-k})-s(t_n4^{j-k})}{2^{j}}.\label{eq:eg2-1}
    \end{align}
    For all large enough $j$, it follows from the recurrence relations \eqref{def:rs} that 
    \begin{align}
        s(t_{n+1}4^{j-k})-s(t_n4^{j-k}) & \leq s\bigl((t_{n}+L)4^{j-k}\bigr)-s(t_n4^{j-k})\notag\\
        & = r(t_n4^{j-k})+r(t_n4^{j-k}+1)+\cdots + r\bigl((t_{n}+L)4^{j-k}-1\bigr)\notag\\
        & = \sum_{i=0}^{L4^{j-k-1}-1}\sum_{h=0}^{3}r(t_n4^{j-k}+4i+h)\notag\\
         & = 2\sum_{i=0}^{L4^{j-k-1}-1}r(t_n4^{j-k-1}+i)\qquad \text{by \eqref{def:rs}}\notag\\
         & = 2\bigl(s((t_{n}+L)4^{j-k-1})-s(t_n4^{j-k-1})\bigr)\notag\\
         & = \cdots = 2^{j-k}(s(t_n+L)-s(t_n))\notag\\
         & \leq 2^{j-k}L.\label{eq:eg2-2}
    \end{align}
    Combining Equations \eqref{eq:eg2-1} and \eqref{eq:eg2-2}, we have \[a_{\mathbf{s}}(t_{n+1}4^{-k}) - a_{\mathbf{s}}(t_n4^{-k})\leq 4^{-k}L.\]
    If Condition \eqref{eq:mainthm-2} holds, then there exists a constant $c>0$ such that 
    \[4^{-k}L\geq a_{\mathbf{s}}(t_{n+1}4^{-k}) - a_{\mathbf{s}}(t_n4^{-k})\geq c\, 2^{-k}.\]
    However, for all sufficiently large $k$, the upper bound $4^{-k}L$ is smaller than the lower bound $c\,4^{-k/2}$, which is a contradiction.
\end{example}

\section*{Acknowledgement}
The first author is supported by Guangzhou Basic and Applied Basic Research Projects (Outstanding Doctoral Researchers, ``Sustained Development'', Grant No. 2025A04J5320) and the National Natural Science Foundation of China (Grant Nos. 12371086, 12271175).

\end{document}